\newtheorem{lemma}{Lemma}[section]
\newtheorem{remark}[lemma]{Remark}
\newtheorem{example}[lemma]{Example}
\newtheorem{algo}[lemma]{Framework}
\newcommand{\smallsum}{\textstyle\sum}
\providecommand{\N}{{\ensuremath{\mathbb{N}}}}
\providecommand{\R}{{\ensuremath{\mathbb{R}}}}
\providecommand{\E}{{\ensuremath{\mathbbm{E}}}}
\renewcommand{\P}{\mathbbm{P}}
\title{Deep learning-based numerical methods for high-dimensional 
parabolic partial differential equations and backward stochastic 
differential equations}
\author{Weinan E$^1$, Jiequn Han$^2$, and Arnulf Jentzen$^3$
\medskip
\\
\small{$^1$Beijing Institute of Big Data Research (China), 
Princeton University (USA),}
\\
\small{and Peking University (China), 
e-mail: weinan (at) math.princeton.edu}
\smallskip
\\
\small{$^2$Princeton University (USA), 
e-mail: jiequnh (at) princeton.edu}
\smallskip
\\
\small{$^3$ETH Zurich (Switzerland), 
e-mail: arnulf.jentzen (at) sam.math.ethz.ch}}
\begin{document}

\maketitle

\begin{abstract}
We propose a new algorithm 
for solving parabolic partial differential equations (PDEs)
and backward stochastic differential 
equations (BSDEs) in high dimension, by making an analogy between the BSDE 
and reinforcement learning with the gradient of the solution playing
the role of the policy function, and 
the loss function given by 
the error between the prescribed terminal condition 
and the solution of the BSDE. 
The policy function is then approximated by a neural network, 
as is done in deep reinforcement learning.
Numerical results using {\sc TensorFlow} illustrate the efficiency 
and accuracy of the proposed algorithms 
for several $ 100 $-dimensional nonlinear PDEs from physics and finance 
such as the Allen-Cahn equation, the Hamilton-Jacobi-Bellman equation, and 
a nonlinear pricing model for financial derivatives.
\end{abstract}

\tableofcontents

\section{Introduction}

Developing efficient numerical algorithms for high dimensional (say,  hundreds of dimensions) partial
differential equations (PDEs) has been one of the most challenging
tasks in applied mathematics.  As is well-known, the difficulty lies 
in the ``curse of dimensionality'' \cite{MR2641641}, namely,
as the dimensionality grows, the complexity of the algorithms grows exponentially.
For this reason,  there are only a limited number of cases where 
practical high dimensional algorithms have been developed.
For linear parabolic PDEs, one can use the Feynman-Kac formula and Monte Carlo methods to develop efficient algorithms
to evaluate solutions at any given space-time locations.  
For a class of inviscid Hamilton-Jacobi equations, 
Darbon \& Osher have recently developed an algorithm which performs numerically well 
in the case of such high dimensional inviscid Hamilton-Jacobi equations; see \cite{MR3543239}.
Darbon \& Osher's algorithm is based on results from compressed sensing 
and on the Hopf formulas for the Hamilton-Jacobi equations.
A general algorithm for (nonlinear) parabolic PDEs
based on the Feynman-Kac and Bismut-Elworthy-Li formula and a multi-level decomposition of
Picard iteration was developed in \cite{Eetal2017} and has been 
shown to be quite efficient on a number examples in finance and physics. 
The complexity of the algorithm is shown to be 
$O(d \varepsilon^{-4})$ for semilinear heat equations, 
where $d$ is the dimensionality of the problem and $\varepsilon$ is the required accuracy.

In recent years, a new class of techniques, called deep learning, have
emerged in machine learning and have proven to be very effective in
dealing with a large class of high dimensional problems in computer vision (cf., e.g., \cite{Krizhevsky2012}),
natural language processing (cf., e.g., \cite{Hintonetal2012}), 
time series analysis, etc.\ (cf., e.g., \cite{Goodfellow2016,LeCun2015}).
This success fuels in speculations that deep learning might hold
the key to solve the curse of dimensionality problem.
It should be emphasized that at the present time, there are no theoretical
results that support such claims although the practical success of
deep learning has been astonishing.
However, this should not prevent us from trying to apply deep learning to
other problems where the curse of dimensionality has been the issue.

In this paper, we explore the use of deep learning for solving general high dimensional PDEs.
To this end, it is necessary to formulate the PDEs as a learning problem.
Motivated by  ideas in  \cite{Jiequn2016} where deep learning-based
algorithms were developed for high dimensional stochastic control 
problems, we explore a connection between (nonlinear) parabolic PDEs and 
backward stochastic differential equations (BSDEs) 
(see \cite{PardouxPeng1990,Peng1991,PardouxPeng1992})
since BSDEs share a lot of common features with stochastic control problems.

\section{Main ideas  of the algorithm}
\label{sec:derivation}

We will consider a fairly general class  of nonlinear parabolic PDEs 
(see \eqref{eq:PDE_numerics} in Subsection~\ref{sec:example_setting} below).
The proposed algorithm is based on the following set of ideas:
\begin{enumerate}[(i)]
\item Through the so-called nonlinear Feynman-Kac formula, we can formulate the
PDEs equivalently as BSDEs.
\item One can view the BSDE as a stochastic control problem
with the gradient of the solution being the policy function.
These stochastic control problems can then be viewed as model-based
reinforcement learning problems.
\item The (high dimensional) policy function can then be approximated by 
a deep neural network, as has been done in deep reinforcement learning.
\end{enumerate}

Instead of formulating initial value problems, as is
commonly done in the PDE literature, we 
consider the set up with terminal conditions since this facilitates
making connections with BSDEs. 
Terminal value problems can obviously be transformed to 
initial value problems and vice versa.

In the remainder of this section we present a rough sketch of the derivation of the proposed algorithm, which we refer to as
\emph{deep BSDE solver}. In this derivation we restrict ourself to a specific class of nonlinear PDEs, that is, we 
restrict ourself to semilinear heat equations (see \eqref{eq:PDE} below) and refer to 
Subsections~\ref{sec:general_case} and \ref{sec:example_setting} below for the general introduction of 
the deep BSDE solver.

\subsection{An example: a semilinear heat partial differential equation (PDE)}
\label{sec:the_PDE}

Let $ T \in (0,\infty) $, $ d \in \N $,
$ \xi \in \R^d $,
let
$ 
  f \colon 
  \R \times \R^d
  \to \R
$
and
$ 
  g \colon \R^d \to \R 
$
be continuous functions,
and let 
$ 
  u = ( u(t,x) )_{
    t \in [0,T], x \in \R^d
  } 
  \in C^{ 1, 2 }( [0,T] \times \R^d , \R ) 
$
satisfy 
for all $ t \in [0,T] $, $ x \in \R^d $ that
$ u(T,x) = g(x) $
and
\begin{equation}
\label{eq:PDE}
\tag{PDE}
\begin{split}
&
  \frac{ \partial  u}{ \partial t } ( t, x )
  +
  \frac{ 1 }{ 2 }
   ( \Delta_x u )( t, x )
  +
  f\big( 
    u(t,x), 
     ( \nabla_x u )( t, x ) 
  \big)
  = 
  0 .
\end{split}
\end{equation}
A key idea of this work is to reformulate the PDE~\eqref{eq:PDE}
as an appropriate \emph{stochastic control problem}.

\subsection{Formulation of the PDE as a suitable stochastic control problem}

More specifically, let
$ ( \Omega, \mathcal{F}, \P ) $ 
be a probability space,
let $ W \colon [0,T] \times \Omega \to \R^d $
be a $ d $-dimensional standard
Brownian motion on $ ( \Omega, \mathcal{F}, \P ) $,
let $ \mathbb{F} = ( \mathbb{F}_t )_{ t \in [0,T] } $
be the normal filtration on 
$ ( \Omega, \mathcal{F}, \P ) $
generated by $ W $, 
let $ \mathcal{A} $
be the set of all $ \mathbb{F} $-adapted $ \R^d $-valued 
stochastic processes with continuous sample paths,
and for every 
$ y \in \R $
and every $ Z \in \mathcal{A} $
let 
$ Y^{ y, Z } \colon [0,T] \times \Omega \to \R $ 
be an $ \mathbb{F} $-adapted stochastic process with continuous sample paths 
which satisfies that for all $ t \in [0,T] $
it holds $ \P $-a.s.\ that
\begin{equation}
\label{eq:FSDE_0}
  Y^{ y, Z }_t
  =
  y
  -
  \int_0^t 
  f\big( 
    Y^{ y, Z }_s, Z_s 
  \big)
  \, ds
  +
  \int_0^t
  \left<
    Z_s 
    , 
    dW_s
  \right>_{ \R^d }
  .
\end{equation}
We now view the solution 
$ 
  u \in 
  C^{ 1,2 }( [0,T] \times \R^d , \R ) 
$ 
of \eqref{eq:PDE} 
and its spatial derivative 
as the solution of a stochastic control problem 
associated to \eqref{eq:FSDE_0}.
More formally, 
under suitable regularity hypotheses on the nonlinearity $ f $
it holds that 
the pair consisting of
$ 
  u(0,\xi) \in \R 
$
and 
$
  (
    (
      \nabla_x u 
    )(t,\xi + W_t ) 
  )_{ t \in [0,T] }
  \in \mathcal{A}
$
is the (up to indistinguishability) unique global minimum of the function
\begin{equation}
\label{eq:stochastic_control2}
  \R \times \mathcal{A} 
  \ni
  ( y, Z )
  \mapsto
  \E\!\big[ 
    | 
      Y^{ y, Z }_T
      - 
      g( \xi + W_T ) 
    |^2
  \big]
  \in
  [0,\infty]
  .
\end{equation}
One can also view the stochastic control problem~\eqref{eq:FSDE_0}--\eqref{eq:stochastic_control2} 
(with $Z$ being the control)
as a model-based reinforcement learning problem.
In that analogy, we view 
$Z $
as the policy and we approximate $ Z \in \mathcal{A} $ using
feedforward neural networks (see \eqref{eq:gradient_approx} 
and Section~\ref{sec:numerics} below for further details). 
The process 
$ u( t, \xi + W_t ) $,
$ t \in [0,T] $,
corresponds to 
the value function associated to the stochastic
control problem and can be computed approximatively
by employing the policy $ Z $
(see \eqref{eq:Y_dynamics} below for details). 
The connection between the PDE~\eqref{eq:PDE}
and the stochastic control problem~\eqref{eq:FSDE_0}--\eqref{eq:stochastic_control2} 
is based on the nonlinear Feynman-Kac formula which 
links PDEs and BSDEs (see \eqref{eq:BSDE} and \eqref{eq:nonlinear_Feynman_Kac} below).

\subsection{The nonlinear Feynman-Kac formula}

Let 
$ Y \colon [0,T] \times \Omega \to \R $
and 
$ Z \colon [0,T] \times \Omega \to \R^d $
be $ \mathbb{F} $-adapted stochastic processes with continuous sample paths
which satisfy that for all 
$ t \in [0,T] $
it holds $ \P $-a.s.\ that
\begin{equation}
\tag{BSDE}
\label{eq:BSDE}
  Y_t 
  = g( \xi + W_T ) 
  + \int_t^T f( Y_s, Z_s ) \, ds
  - \int_t^T \left< Z_s , dW_s \right>_{ \R^d }
  .
\end{equation}
Under suitable additional regularity assumptions 
on the nonlinearity $ f $
we have that the nonlinear parabolic PDE~\eqref{eq:PDE} is 
related to the BSDE~\eqref{eq:BSDE}
in the sense that
for all $ t \in [0,T] $ 
it holds $ \P $-a.s.\ that
\begin{equation}
\label{eq:nonlinear_Feynman_Kac}
  Y_t 
  =
  u( t, \xi + W_t )
  \in \R
\qquad  
  \text{and}
\qquad 
  Z_t 
  =
  ( \nabla_x u )( t, \xi + W_t )
  \in \R^d
\end{equation}
(cf., e.g., \cite[Section~3]{PardouxPeng1992} and \cite{PardouxTang1999}). 
The first identity in~\eqref{eq:nonlinear_Feynman_Kac} 
is sometimes referred to as 
\emph{nonlinear Feynman-Kac formula} in the 
literature.

\subsection{Forward discretization of the backward stochastic differential equation (BSDE)}

To derive the deep BSDE solver, we first plug
the second identity in \eqref{eq:nonlinear_Feynman_Kac}
into \eqref{eq:BSDE}
to obtain that for all $ t \in [0,T] $
it holds $ \P $-a.s.\ that
\begin{equation}
\begin{split}
  Y_t 
&
  = g( \xi + W_T ) 
  + 
  \int_t^T 
  f\big( 
    Y_s, 
      ( \nabla_x u )( s, \xi + W_s ) 
  \big) \, ds
  - 
  \int_t^T 
    \left<
      ( \nabla_x u )( s, \xi + W_s ) 
      ,
      dW_s 
    \right>_{ \R^d }
  .
\end{split}
\end{equation}
In particular, we obtain that for all $ t_1, t_2 \in [0,T] $
with $ t_1 \leq t_2 $ it holds $ \P $-a.s.\ that
\begin{equation}
\label{eq:BSDE3}
\begin{split}
  Y_{ t_2 }
& =
  Y_{ t_1 }
  -
  \int_{ t_1 }^{ t_2 } 
  f\big( 
    Y_s, 
      ( \nabla_x u )( s, \xi + W_s ) 
  \big) \, ds
  +
  \int_{ t_1 }^{ t_2 } 
    \left<
      ( \nabla_x u )( s, \xi + W_s ) 
      ,
      dW_s 
    \right>_{ \R^d }
  .
\end{split}
\end{equation}
Next we apply 
a time discretization to 
\eqref{eq:BSDE3}.
More specifically, 
let $ N \in \N $ 
and let $ t_0, t_1, \dots, t_N \in [0,T] $
be real numbers which satisfy
\begin{equation}
  0 = t_0 < t_1 < \ldots < t_N = T
\end{equation}
and observe that 
\eqref{eq:BSDE3} 
suggests 
for $ N \in \N $ sufficiently large
that 
\begin{align}
\label{eq:approx_Y}
&
  Y_{ t_{ n + 1 } } 
\\
\nonumber
& \approx
  Y_{ t_n }
  -
  f\big( 
    Y_{ t_n } , 
    ( \nabla_x u )( t_n, \xi + W_{ t_n } ) 
  \big) 
  \,
  ( t_{ n + 1 } - t_n )
  +
  \left<
    ( \nabla_x u )( t_n, \xi + W_{ t_n } ) 
    ,
    W_{ t_{ n + 1 } }
    -
    W_{ t_n }
  \right>_{ \R^d }
  .
\end{align}

\subsection{Deep learning-based approximations}
In the next step we employ 
a deep learning approximation for 
\begin{equation}
\label{eq:deep_learning_policy}
  ( \nabla_x u )( t_n, x ) 
  \in \R^d
  ,
\qquad
  x \in \R^d ,
\qquad
  n \in \{ 0, 1, \dots, N \} ,
\end{equation}
but not for 
$ u( t_n, x) \in \R $,
$ x \in \R^d $,
$ n \in \{ 0, 1, \dots, N \} $.
Approximations for
$ u( t_n, x) \in \R $,
$ x \in \R^d $,
$ n \in \{ 0, 1, \dots, N \} $,
in turn,
can be computed recursively by using
\eqref{eq:approx_Y}
together with 
deep learning approximations for \eqref{eq:deep_learning_policy}.
More specifically, 
let 
$ 
  \rho \in \N 
$,
let 
$
  \mathcal{U}^{ \theta } 
  \in \R
$,
$ \theta \in \R^{ \rho } $,
be real numbers,
let
$
  \mathcal{V}^{ \theta }_n \colon
  \R^d \to \R^d
$,
$ n \in \{ 0, 1, \dots, N - 1 \} $,
$ \theta \in \R^{ \rho } $,
be continuous functions,
and let
$ 
  \mathcal{Y}^{ \theta } \colon 
  \{ 0, 1, \dots, N \} \times \Omega 
  \to \R
$,
$
  \theta \in \R^{ \rho }
$, 
be stochastic processes
which satisfy
for all 
$ \theta \in \R^{ \rho } $,
$ n \in \{ 0, 1, \dots, N - 1 \} $
that
$
  \mathcal{Y}^{ \theta }_0 
  = \mathcal{U}^{ \theta }
$
and
\begin{equation}
\label{eq:Y_dynamics}
\begin{split}
&
  \mathcal{Y}^{ \theta }_{ n + 1 }
  =
  \mathcal{Y}^{ \theta }_n
  -
  f\big(
    \mathcal{Y}^{ \theta }_n ,
    \mathcal{V}^{ \theta }_n( \xi + W_{ t_n } ) 
  \big)
  \,
  ( t_{ n + 1 } - t_n )
  +
  \left<
    \mathcal{V}^{ \theta }_n( \xi + W_{ t_n } ) 
    ,
      W_{ t_{ n + 1 } }
      -
      W_{ t_n }
  \right>_{ \R^d }
  .
\end{split}
\end{equation}
We think of $ \rho \in \N $ 
as the number of parameters in the neural network, 
for all appropriate
$ \theta \in \R^{ \rho } $
we think of 
$
  \mathcal{U}^{ \theta } 
  \in \R
$
as suitable approximations
\begin{equation}
\label{eq:solution_approx}
  \mathcal{U}^{ \theta }
  \approx
  u( 0 , \xi )
\end{equation}
of $ u( 0, \xi ) $,
and 
for all appropriate 
$ \theta \in \R^{ \rho } $,
$ x \in \R^d $,
$ n \in \{ 0, 1, \dots, N - 1 \} $
we think of
$
  \mathcal{V}^{ \theta }_n( x )
  \in \R^{ 1 \times d }
$
as suitable approximations
\begin{equation}
\label{eq:gradient_approx}
  \mathcal{V}^{ \theta }_n( x )
  \approx
  ( \nabla_x u )( t_n , x )
\end{equation}
of 
$
  ( \nabla_x u )( t_n , x )
$.
\subsection{Stochastic optimization algorithms}
The ``appropriate'' $ \theta \in \R^{ \rho } $ 
can be obtained by minimizing the expected loss function 
through stochastic gradient descent-type algorithms.
For the loss function we pick the squared approximation error 
associated to the terminal condition 
of the BSDE~\eqref{eq:BSDE}.
More precisely,
assume that the function 
$
  \R^{ \rho }
  \ni 
  \theta 
  \mapsto
  \E\big[ 
    |
      \mathcal{Y}^{ \theta }_N
      -
      g( \mathcal{X}_N )
    |^2
  \big]
  \in [0,\infty]
$
has a unique global minimum and 
let 
$ 
  \Lambda \in \R^{ \rho } 
$
be the real vector
for which the function
\begin{equation}
\label{eq:to_minimize}
  \R^{ \rho }
  \ni 
  \theta 
  \mapsto
  \E\big[ 
    |
      \mathcal{Y}^{ \theta }_N
      -
      g( \mathcal{X}_N )
    |^2
  \big]
  \in [0,\infty]
\end{equation}
is minimal. 
Minimizing the function \eqref{eq:to_minimize} 
is inspired by the fact that
\begin{equation}
  \E\!\left[ 
    | Y_T - g( X_T ) 
    |^2
  \right]
  = 0
\end{equation}
according to \eqref{eq:BSDE} above (cf.\ \eqref{eq:stochastic_control2} above).
Under suitable regularity assumptions,
we approximate 
the vector 
$ 
  \Lambda \in \R^{ \rho } 
$ 
through stochastic gradient descent-type 
approximation methods and thereby
we obtain random approximations 
$ 
  \Theta_0 ,
  \Theta_1 ,
  \Theta_2 ,
  \dots
  \colon \Omega 
  \to \R^{ \rho } 
$
of 
$ 
  \Lambda \in \R^{ \rho_{ \vartheta } } 
$. 
For sufficiently large $ N, \rho, m \in \N $
we then employ the random variable
$
  \mathcal{U}^{ \Theta_m }
  \colon \Omega \to \R
$
as a suitable implementable approximation 
\begin{equation}
  \mathcal{U}^{ \Theta_m }
  \approx
  u(0,\xi)
\end{equation}
of $ u(0,\xi) $ 
(cf.\ \eqref{eq:solution_approx} above)
and 
for sufficiently large $ N, \rho, m \in \N $ 
and all 
$ x \in \R^d $,
$ n \in \{ 0, 1, \dots, N - 1 \} $
we use the random variable 
$
  \mathcal{V}^{ \Theta_m }_n( x )
  \colon 
  \Omega \to \R^{ 1 \times d }
$
as a suitable implementable approximation 
\begin{equation}
  \mathcal{V}^{ \Theta_m }_n( x )
  \approx
  ( \nabla_x u )( t_n , x )
\end{equation}
of 
$ 
  ( \nabla_x u )( t_n , x )
$ (cf.\ \eqref{eq:gradient_approx} above).
In the next section the proposed approximation method 
is described in more detail.

To simplify the presentation we have restricted us in 
\eqref{eq:PDE}, \eqref{eq:FSDE_0}, \eqref{eq:stochastic_control2}, \eqref{eq:BSDE} above 
and Subsection~\ref{sec:specific} below 
to semilinear heat equations. We refer to Subsection~\ref{sec:general_case}
and Section~\ref{sec:numerics} below for the general description of the deep BSDE solver.

\section{Details of the algorithm}
\label{sec:algorithm}

\subsection{Formulation of the proposed algorithm in the case of semilinear heat equations}
\label{sec:specific}

In this subsection we describe the algorithm proposed
in this article in the specific situation 
where \eqref{eq:PDE} is the PDE under consideration,
where \emph{batch normalization} (see Ioffe \& Szegedy~\cite{Ioffe2015}) is not employed, 
and 
where the plain-vanilla stochastic gradient descent approximation method 
with a constant learning rate $ \gamma \in (0,\infty) $
and without mini-batches is the employed stochastic algorithm. 
The general framework, which includes the setting in this subsection 
as a special case, can be found in Subsection~\ref{sec:general_case}
below.

\begin{algo}[Specific case]
Let 
$ T, \gamma \in (0,\infty) $, $ d, \rho, N \in \N $,
$ \xi \in \R^d $, 
let 
$
  f \colon 
  \R \times \R^d
  \to \R
$
and
$
  g \colon \R^d \to \R
$
be functions,
let $ ( \Omega, \mathcal{F}, \P ) $ be a probability space, 
let 
$
  W^m \colon [0,T] \times \Omega \to \R^d
$, 
$ m \in \N_0 $,
be independent $ d $-dimensional standard Brownian motions on 
$
  ( \Omega, \mathcal{F}, \P )
$, 
let $ t_0, t_1, \dots, t_N \in [0,T] $
be real numbers with
\begin{equation}
  0 = t_0 < t_1 < \ldots < t_N = T
  ,
\end{equation}
for every 
$ \theta \in \R^{ \rho } $
let 
$ 
  \mathcal{U}^{ \theta } 
  \in \R
$,
for every 
$ \theta \in \R^{ \rho } $,
$ n \in \{ 0, 1, \ldots, N - 1 \} $
let
$ 
  \mathcal{V}_n^{  \theta } 
  \colon 
  \R^d
  \to \R^d
$
be a function,
for every 
$ m \in \N_0 $,
$
  \theta \in \R^{ \rho }  
$
let 
$ 
  \mathcal{Y}^{ \theta, m } 
  \colon \{ 0, 1, \ldots, N \} \times \Omega \to \R^k
$
be the stochastic process which satisfies for all 
$
  n \in \{ 0, 1, \ldots, N - 1 \}
$ 
that
$
  \mathcal{Y}_0^{ \theta, m } 
  = 
  \mathcal{U}^{ \theta }
$
and
\begin{equation}
  \mathcal{Y}_{ n + 1 }^{ \theta, m }
  = 
  \mathcal{Y}_n^{ \theta, m } 
  -
  f\big(
    \mathcal{Y}_n^{ \theta, m }
    , 
    \mathcal{V}_n^{ \theta }(
      \xi + W^m_{ t_n }
    )
  \big) 
  \,
  ( t_{ n + 1 } - t_n )
  + 
  \left<
    \mathcal{V}_n^{ \theta }(
      \xi + W^m_{ t_n }
    )
    ,
    W_{ t_{ n + 1 } }^{ m }
    - 
    W_{ t_n }^{ m }
  \right>_{ \R^d }
  ,
\end{equation}
for every 
$ m \in \N_0 $
let
$
  \phi^m
  \colon
  \R^{ \rho } 
  \times 
  \Omega
  \to 
  \R
$
be the function which satisfies for all 
$ \theta \in \R^{ \rho } $,
$ \omega \in \Omega $
that
\begin{equation}
\begin{split}
  \phi^m( \theta, \omega ) 
  = 
  \big| 
    \mathcal{Y}_N^{ \theta, m }( \omega )
    -
    g(
      \xi + W^m_T( \omega )
    ) 
  \big|^2
  ,
\end{split}
\end{equation}
for every 
$ m \in \N_0 $
let
$
  \Phi^m
  \colon
  \R^{ \rho } 
  \times 
  \Omega
  \to 
  \R^{ \rho }
$
be a function which satisfies 
for all
$ \omega \in \Omega $,
$ 
  \theta \in 
  \{ 
    v \in 
    \R^{ \rho } 
    \colon
    (
      \R^{ \rho } 
      \ni w \mapsto 
      \phi^{ m }_{ {\bf s} }( w, \omega )
      \in
      \R
      \text{ is differentiable at }
      v \in \R^{ \rho }
    )
  \}
$
that
\begin{equation}
  \Phi^m( \theta, \omega ) 
  =
  ( \nabla_{ \theta } \phi^m )( \theta, \omega ) 
  ,
\end{equation}
and
let 
$ 
  \Theta \colon \N_0 \times \Omega \to 
  \R^{ \rho }
$ 
be a stochastic process
which satisfy for all $ m \in \N $ that 
\begin{equation}
  \Theta_m 
  =
  \Theta_{ m - 1 } 
  -
  \gamma \cdot
  \Phi^m(
    \Theta_{ m - 1 } 
  )
  .
\end{equation}
\end{algo}

Under suitable further hypotheses (cf.\ Sections~\ref{sec:numerics} and \ref{sec:special_cases} below),
we think 
in the case of sufficiently large
$ \rho, N, m \in \N $ 
and sufficiently small $ \gamma \in (0,\infty) $
of
$
  \mathcal{U}^{ \Theta_m } \in \R
$
as an appropriate approximation 
\begin{equation}
  u(0,\xi) \approx \mathcal{U}^{ \Theta_m }
\end{equation}
of the solution 
$ u(t,x) \in \R $, 
$ (t,x) \in [0,T] \times \R^d $,
of the PDE
\begin{equation}
  \frac{ \partial u}{ \partial t } ( t, x )
  +
  \frac{ 1 }{ 2 } 
  \,
  ( \Delta_x u )( t, x )
  +
  f\big( 
    u(t,x),  ( \nabla_x u )(t,x) 
  \big)
  = 0
\end{equation}
for 
$ (t,x) \in [0,T] \times \R^d $.

\subsection{Formulation of the proposed algorithm in the general case}
\label{sec:general_case}

\begin{algo}[General case]
Let 
$ T \in (0,\infty) $, $ d, k, \rho, \varrho, N, \varsigma \in \N $,
$ \xi \in \R^d $, 
let 
$
  f \colon [0,T] \times \R^d \times \R^k \times \R^{ k \times d }
  \to \R
$,
$
  g \colon \R^d \to \R^k
$,
and
$
  \Upsilon
  \colon 
    [0,T]^2 \times \R^d  
    \times \R^d
  \to
    \R^d
$
be functions,
let $ ( \Omega, \mathcal{F}, \P ) $ be a probability space, 
let 
$
  W^{ m, j } \colon [0,T] \times \Omega \to \R^d
$, 
$ m, j \in \N_0 $,
be independent $ d $-dimensional standard Brownian motions on 
$
  ( \Omega, \mathcal{F}, \P )
$, 
let $ t_0, t_1, \dots, t_N \in [0,T] $
be real numbers with
\begin{equation}
  0 = t_0 < t_1 < \ldots < t_N = T
  ,
\end{equation}
for every 
$ \theta \in \R^{ \rho } $
let 
$ 
  \mathcal{U}^{ \theta } 
  \in \R^k
$,
for every 
$ \theta \in \R^{ \rho } $,
$
  {\bf s} \in \R^{ \varsigma } 
$,
$ n \in \{ 0, 1, \ldots, N - 1 \} $,
$ j \in \N_0 $
let
$ 
  \mathcal{V}_{ n, j }^{  \theta, {\bf s} } 
  \colon 
  ( \R^d )^{ \N }
  \to \R^{ k \times d }
$
be a function,
for every 
$ m, j \in \N_0 $
let 
$
  \mathcal{X}^{ m, j } 
  \colon \{ 0, 1, \dots, N \} \times \Omega \to \R^d
$
and
$ 
  \mathcal{Y}^{ \theta, {\bf s}, m, j } 
  \colon \{ 0, 1, \ldots, N \} \times \Omega \to \R^k
$,
$
  \theta \in \R^{ \rho }  
$,
$
  {\bf s} \in \R^{ \varsigma }
$,
be stochastic processes which satisfy for all 
$
  \theta \in \R^{ \rho }
$,
$
  {\bf s} \in \R^{ \varsigma }
$,
$
  n \in \{ 0, 1, \ldots, N - 1 \}
$ 
that
\begin{equation}
  \mathcal{X}^{ m, j }_0 = \xi
  ,
  \qquad
  \mathcal{Y}_0^{ \theta, {\bf s}, m, j } 
  = 
  \mathcal{U}^{ \theta }
  ,
  \qquad
\label{eq:X_dynamic}
  \mathcal{X}^{ m, j }_{ n + 1 }
  =
  \Upsilon\big(   
    t_n 
    ,
    t_{ n + 1 } 
    ,
    \mathcal{X}^{ m, j }_n
    ,
    W^{ m, j }_{ t_{ n + 1 } } - W^{ m, j }_{ t_n }
  \big)
  ,
\end{equation}
\begin{multline}
  \mathcal{Y}_{ n + 1 }^{ \theta, {\bf s}, m, j }
  = 
  \mathcal{Y}_n^{ \theta, {\bf s}, m, j } 
  -
  f\big(
    t_n
    ,
    \mathcal{X}_n^{ m, j }
    , 
    \mathcal{Y}_n^{ \theta, {\bf s}, m, j }
    , 
    \mathcal{V}_{ n, j }^{ \theta, {\bf s} }(
      \{
        \mathcal{X}_n^{ m, i }
      \}_{ 
        i \in \N
      }
    )
  \big) 
  \,
  ( t_{ n + 1 } - t_n )
\\
  + 
    \mathcal{V}_{ n, j }^{ \theta, {\bf s} }(
      \{ 
        \mathcal{X}_n^{ m, i }
      \}_{ 
        i \in \N
      }
    )
    \,
  (
    W_{ t_{ n + 1 } }^{ m, j }
    - 
    W_{ t_n }^{ m, j }
  )
  ,
\end{multline}
for every 
$ m, j \in \N_0 $,
$ {\bf s} \in \R^{ \varsigma } $
let
$
  \phi^{ m, j }_{ {\bf s} }
  \colon
  \R^{ \rho } 
  \times 
  \Omega
  \to 
  \R
$
be the function which satisfies for all 
$ \theta \in \R^{ \rho } $,
$ \omega \in \Omega $
that
\begin{equation}
\begin{split}
  \phi^{ m, j }_{ {\bf s} }( \theta, \omega ) 
  = 
  \| 
    \mathcal{Y}_N^{ \theta, {\bf s}, m, j }( \omega )
    -
    g(
      \mathcal{X}_N^{ m, j }( \omega )
    ) 
  \|_{ \R^k }^2
  ,
\end{split}
\end{equation}
for every 
$ m, j \in \N_0 $,
$ {\bf s} \in \R^{ \varsigma } $
let
$
  \Phi^{ m, j }_{ {\bf s} }
  \colon
  \R^{ \rho } 
  \times 
  \Omega
  \to 
  \R^{ \rho }
$
be a function which satisfies 
for all
$ \omega \in \Omega $,
$ 
  \theta \in 
  \{ 
    v \in 
    \R^{ \rho } 
    \colon
    (
      \R^{ \rho } 
      \ni w \mapsto 
      \phi^{ m, j }_{ {\bf s} }( w, \omega )
      \in
      \R
      \text{ is differentiable at }
      v \in \R^{ \rho }
    )
  \}
$
that
\begin{equation}
  \Phi_{ {\bf s} }^{ m, j }( \theta, \omega ) 
  =
  ( \nabla_{ \theta } \phi_{ {\bf s} }^{ m, j } )( \theta, \omega ) 
  ,
\end{equation}
let 
$
  \mathcal{S} \colon 
  \R^{ \varsigma } \times 
  \R^{ \rho } \times 
  ( \R^d )^{ \{ 0, 1, \dots, N - 1 \} \times \N }
  \to \R^{ \varsigma }
$
be a function,
for every $ m \in \N $
let
$
  \psi_m 
  \colon 
  \R^{ \varrho }
  \to
  \R^{ \rho }
$
and
$
  \Psi_m 
  \colon
  \R^{ \varrho }
  \times
  (
    \R^{ \rho }
  )^{ \N }
  \to 
  \R^{ \varrho }
$
be functions,
and
let 
$
  \mathbb{S} \colon \N_0 \times \Omega \to \R^{ \varsigma }
$,
$ 
  \Xi \colon \N_0 \times \Omega \to 
  \R^{ \varrho }
$, 
and
$ 
  \Theta \colon \N_0 \times \Omega \to 
  \R^{ \rho }
$ 
be stochastic processes 
which satisfy for all $ m \in \N $ that 
\begin{equation}
\label{eq:S_dynamics}
  \mathbb{S}_m
  =
  \mathcal{S}\big(
    \mathbb{S}_{ m - 1 }
    ,
    \Theta_{ m - 1 }
    ,
    \{ 
      \mathcal{X}^{ m - 1, i }_n
    \}_{ 
      (n, i) \in 
      \{ 0, 1, \dots, N - 1 \} \times \N
    }
  \big)
  ,
\end{equation}
\begin{equation}
\label{eq:theta_dynamics}
  \Xi_{ m }
  = 
  \Psi_m\big(
    \Xi_{ m - 1 }
    ,
    \{
      \Phi^{ m - 1, j }_{ \mathbb{S}_m }( 
        \Theta_{ m - 1 }
      )
    \}_{ j \in \N }
  \big)
  ,
%
\qquad
  \text{and}
\qquad 
  \Theta_m 
  =
  \Theta_{ m - 1 } 
  -
  \psi_m( \Xi_m )
  .
\end{equation}
\end{algo}

\subsection{Comments on the proposed algorithm}

The dynamics in \eqref{eq:X_dynamic} associated 
to the stochastic processes 
$
  ( \mathcal{X}^{ m, j }_n )_{ n \in \{ 0, 1, \dots, N \} }
$
for 
$
  m, j \in \N_0
$
allows us to incorporate 
different algorithms 
for the discretization of the considered 
forward stochastic differential equation (SDE) into the deep BSDE solver in Subsection~\ref{sec:general_case}. 
The dynamics in \eqref{eq:theta_dynamics}
associated to the stochastic processes 
$ \Xi_m $, $ m \in \N_0 $,
and 
$ \Theta_m $, $ m \in \N_0 $,
allows us to incorporate different 
stochastic approximation algorithms 
such as 
\begin{itemize}
\item 
stochastic gradient descent with or without mini-batches 
(see Subsection~\ref{sec:SGD} below)
as well as 
\item
adaptive moment estimation (Adam) 
with mini-batches
(see Kingma \& Jimmy~\cite{Kingma2015} and Subsection~\ref{sec:Adam} below)
into the deep BSDE solver in Subsection~\ref{sec:general_case}.
\end{itemize}
The dynamics in \eqref{eq:S_dynamics} 
associated to the stochastic process 
$ \mathbb{S}_m $, $ m \in \N_0 $,
allows us to incorporate the standardization procedure in 
\emph{batch normalization}  
(see Ioffe \& Szegedy~\cite{Ioffe2015} and also 
Section~\ref{sec:numerics} below) into 
the deep BSDE solver in Subsection~\ref{sec:general_case}. In that case we think of
$ \mathbb{S}_m $, $ m \in \N_0 $, 
as approximatively calculated means and standard deviations.

\section{Examples for nonlinear partial differential equations (PDEs)
and nonlinear backward stochastic differential equations (BSDEs)}
\label{sec:numerics}

In this section we illustrate the algorithm
proposed in Subsection~\ref{sec:general_case} 
using several concrete example PDEs. 
In the examples below we will employ 
the general approximation method 
in Subsection~\ref{sec:general_case} 
in conjunction with the Adam optimizer (cf.\ Example~\ref{ex:Adam} below and Kingma \& Ba~\cite{Kingma2015})
with mini-batches with $ 64 $ samples in each iteration step 
(see Subsection~\ref{sec:example_setting} for a detailed description).

In our implementation we employ
$ N - 1 $ 
fully-connected feedforward neural networks to represent 
$
  \mathcal{V}^{ \theta }_{ n, j } 
$
for $ n \in \{ 1, 2, \dots, N - 1 \} $,
$ j \in \{ 1, 2, \dots, 64 \} $,
$ \theta \in \R^{ \rho } $ (cf.\ also Figure~\ref{fig:nn_architecture}
below for a rough sketch of the architecture of the deep BSDE solver). 
Each of the neural networks consists of $ 4 $ layers
($ 1 $ input layer [$ d $-dimensional], $ 2 $ hidden layers [both $ d+10 $-dimensional], 
and $ 1 $ output layer [$ d $-dimensional]).
The number of hidden units in each hidden layer is equal to $ d + 10 $.
We also adopt batch normalization (BN) (see Ioffe \& Szegedy~\cite{Ioffe2015}) 
right after each matrix multiplication and before activation. 
We employ the rectifier function
$
  \R \ni x \mapsto \max\{ 0, x \} \in [0,\infty) 
$
as our activation function for the hidden variables. 
All the weights in the network are initialized using a normal or a uniform distribution 
without any pre-training. 
Each of the numerical experiments presented below 
is performed in {\sc Python} using {\sc TensorFlow}
on a {\sc Macbook Pro} with a $ 2.90 $ Gigahertz (GHz) {\sc Intel Core} i5 
micro processor and 16 gigabytes (GB) of 1867 Megahertz (MHz)
double data rate type three synchronous dynamic 
random-access memory (DDR3-SDRAM).
We also refer to 
the {\sc Python} code~\ref{code:deepPDEmethod} in Subsection~\ref{sec:Python_code} below for an  
implementation of the deep BSDE solver in the case of the $ 100 $-dimensional Allen-Cahn PDE~\eqref{eq:PDE_allencahn}.

\subsection{Setting}
\label{sec:example_setting}

Assume the setting in Subsection~\ref{sec:general_case}, 
assume for all 
$ \theta = ( \theta_1, \dots, \theta_{ \rho } ) \in \R^{ \rho } $
that 
$ k = 1 $, 
$ 
  \rho = 
  d + 1 
  +
  ( N - 1 )
  \left(
    2 d ( d + 10 )
    +
    ( d + 10 )^2
    +
    4 ( d + 10 )
    + 2 d
  \right)
$,
$
  \varrho = 2 \rho 
$,
$
  \mathcal{U}^{ \theta } = \theta_1
$,
$
  \Xi_0 = 0
$,
let 
$ \mu \colon [0,T] \times \R^d \to \R^d $ 
and
$ \sigma \colon [0,T] \times \R^d \to \R^{ d \times d } $ 
be functions, 
let 
$
  u \colon [0,T] \times \R^d \to \R  
$ 
be a continuous and at most polynomially growing function which satisfies for all 
$
  (t,x) \in [0,T) \times \R^d
$
that
$
  u|_{ [0,T) \times \R^d } 
  \in
  C^{ 1, 2 }( [0,T) \times \R^d, \R ) 
$,
$
  u(T,x) = g(x) 
$, 
and
\begin{multline}  
\label{eq:PDE_numerics}
   \frac{ \partial u}{ \partial t } ( t, x )
  +
  \frac{ 1 }{ 2 }
  \operatorname{Trace}\!\big(
    \sigma( t, x ) 
    \, 
    [ \sigma( t, x ) ]^*
    \,
    ( \operatorname{Hess}_x u)( t, x )
  \big) 
  +
  \langle 
    \mu( t, x ) 
    ,
    ( \nabla_x u )( t, x ) 
  \rangle
\\
  + 
  f\big( 
    t, x, 
    u( t, x )    
    ,
    [ ( \nabla_x u)( t, x ) ]^{ * }
    \,
    \sigma( t, x )
  \big)
  = 0
  ,
\end{multline}
let $ \varepsilon = 10^{ - 8 } $,
$ \mathbb{X} = \frac{ 9 }{ 10 } $,
$ \mathbb{Y} = \frac{ 999 }{ 1000 } $,
$ J = 64 $,
$ ( \gamma_m )_{ m \in \N } \subseteq (0,\infty) $,
let 
$
  \operatorname{Pow}_r \colon
  \R^{ \rho }
  \to 
  \R^{ \rho }
$,
$ r \in (0,\infty) $,
be the functions 
which satisfy
for all 
$ r \in (0,\infty) $,
$ 
  x = ( x_1, \dots, x_{ \rho } ) \in \R^{ \rho }
$ 
that
\begin{equation}
  \operatorname{Pow}_{ r }( x )
  = ( | x_1 |^r, \dots, | x_{ \rho } |^r )
  ,
\end{equation}
and assume 
for all $ m \in \N $,
$ x, y \in \R^{ \rho } $,
$ 
  ( \varphi_j )_{ j \in \N } \in 
  ( \R^{ \rho } )^{ \N }
$
that
\begin{equation}
  \Psi_m( x, y, ( \varphi_j )_{ j \in \N } )
  =
  \big(
  \mathbb{X} x
  +
    ( 1 - \mathbb{X} ) 
    \big(
      \tfrac{
        1 
      }{ 
        J
      }
      \smallsum_{ j = 1 }^{ J }
      \varphi_j
    \big)
  ,
  \mathbb{Y} y
  +
    ( 1 - \mathbb{Y} ) 
    \operatorname{Pow}_2\!\big(
      \tfrac{ 1 }{ J }
      \smallsum_{ j = 1 }^{ J }
      \varphi_j
    \big)
  \big)
\end{equation}
and
\begin{equation}
  \psi_m( x, y ) 
  = 
  \left[ 
    \varepsilon
    +
    \operatorname{Pow}_{ \nicefrac{ 1 }{ 2 } }\!\left(
      \frac{
        y
      }{
        ( 1 - \mathbb{Y}^m )
      }
    \right)
  \right]^{ - 1 }
  \frac{ 
    \gamma_m x 
  }{ 
    ( 1 - \mathbb{X}^m ) 
  }
  .
\end{equation}
(cf.\ Example~\ref{ex:Adam} below and Kingma \& Ba~\cite{Kingma2015}).

\begin{remark}
In this remark we illustrate the 
specific choice of the dimension $ \rho \in \N $ 
of $ \theta = ( \theta_1, \dots, \theta_{ \rho } ) \in \R^{ \rho } $ 
in the framework in Subsection~\ref{sec:example_setting} above.
\begin{enumerate}[(i)]
\item 
\label{item:i}
The first component 
of $ \theta = ( \theta_1, \dots, \theta_{ \rho } ) \in \R^{ \rho } $
is employed for approximating the real number $ u( 0, \xi ) \in \R $.

\item
\label{item:ii}
The next $ d $-components
of $ \theta = ( \theta_1, \dots, \theta_{ \rho } ) \in \R^{ \rho } $
are employed for approximating the components of the 
$ ( 1 \times d ) $-matrix
$ 
  ( \tfrac{ \partial }{ \partial x } u )(0,\xi) \, \sigma(0,\xi)
  \in \R^{ 1 \times d }
$.

\item 
\label{item:LN1}
In each of the employed $ N - 1 $ neural networks 
we use $ d ( d + 10 ) $ components 
of $ \theta = ( \theta_1, \dots, \theta_{ \rho } ) \in \R^{ \rho } $
to describe the linear transformation from the $ d $-dimensional first layer (input layer)
to the $ (d + 10) $-dimensional second layer (first hidden layer)
(to uniquely describe a real $ (d+10) \times d $-matrix).

\item 
\label{item:LN2}
In each of the employed $ N - 1 $ neural networks 
we use $ ( d + 10 )^2 $ components 
of $ \theta = ( \theta_1, \dots, \theta_{ \rho } ) \in \R^{ \rho } $
to uniquely describe the linear transformation from the $ ( d + 10 ) $-dimensional second layer 
(first hidden layer) to the $ ( d + 10 ) $-dimensional third layer (second hidden layer)
(to uniquely describe a real $ (d + 10) \times ( d + 10 ) $-matrix).

\item 
\label{item:LN3}
In each of the employed $ N - 1 $ neural networks 
we use $ d ( d + 10 ) $ components 
of $ \theta = ( \theta_1, \dots, \theta_{ \rho } ) \in \R^{ \rho } $
to describe the linear transformation from the $ (d+10) $-dimensional third layer (second hidden layer)
to the $ d $-dimensional fourth layer (output layer)
(to uniquely describe a real $ d \times ( d + 10 ) $-matrix).

\item 
\label{item:last}
After each of the linear transformations in items~\eqref{item:LN1}--\eqref{item:LN3} above 
we employ a componentwise affine linear transformation (multiplication with a diagonal matrix 
and addition of a vector) within the batch normalization procedure, 
i.e.,
in each of the employed $ N - 1 $ neural networks, 
we use $ 2 ( d + 10 ) $ components 
of $ \theta = ( \theta_1, \dots, \theta_{ \rho } ) \in \R^{ \rho } $
for the componentwise affine linear transformation between 
the first linear transformation (see item~\eqref{item:LN1})
and the first application of the activation function,
we use $ 2 ( d + 10 ) $ components 
of $ \theta = ( \theta_1, \dots, \theta_{ \rho } ) \in \R^{ \rho } $
for the componentwise affine linear transformation between 
the second linear transformation (see item~\eqref{item:LN2})
and the second application of the activation function,
and 
we use $ 2 d $ components 
of $ \theta = ( \theta_1, \dots, \theta_{ \rho } ) \in \R^{ \rho } $
for the componentwise affine linear transformation after
the third linear transformation (see item~\eqref{item:LN3}).


\end{enumerate}
Summing \eqref{item:i}--\eqref{item:last} results in 
\begin{equation}
\begin{split}
  \rho 
& =
  \underbrace{
    1 
    +
    d
  }_{
    \text{items~\eqref{item:i}--\eqref{item:ii}}
  }
  +
  \underbrace{
    ( N - 1 )
    \left(
      d ( d + 10 )  
      +
      ( d + 10 )^2
      + 
      d ( d + 10 )  
    \right)
  }_{
    \text{items~\eqref{item:LN1}--\eqref{item:LN3}}
  }
\\ &
\quad
  +
  \underbrace{
    ( N - 1 )
    \left(
      2 ( d + 10 )
      +
      2 ( d + 10 )
      +
      2 d
    \right)
  }_{
    \text{item~\eqref{item:last}}
  }
\\
& 
=
  d + 1 
  +
  ( N - 1 )
  \left(
    2 d ( d + 10 )
    +
    ( d + 10 )^2
    +
    4 ( d + 10 )
    + 2 d
  \right)
  .
\end{split}
\end{equation}
\end{remark}

\begin{figure}[ht]
\includegraphics[width=15cm]{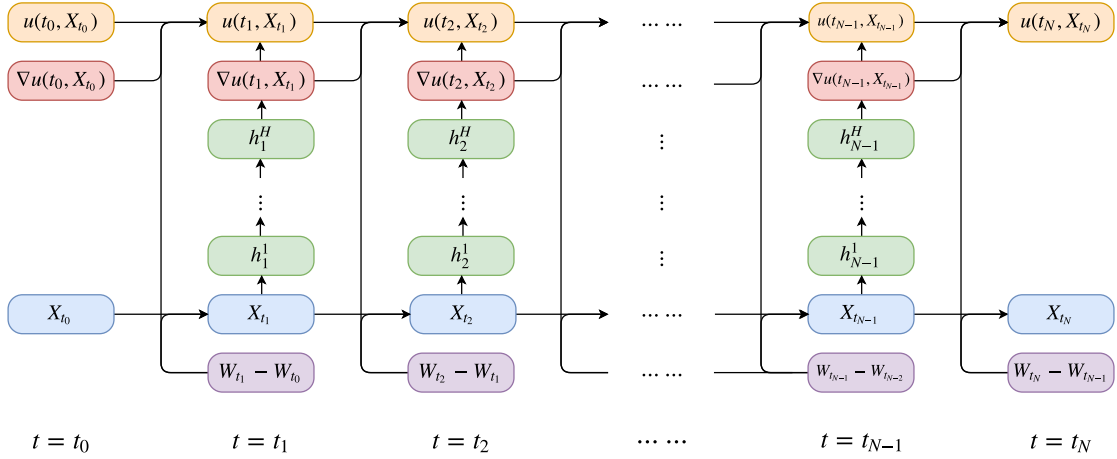}
\caption{Rough sketch of the architecture of the deep BSDE solver.\label{fig:nn_architecture}}
\end{figure}

\subsection{Allen-Cahn equation}
\label{subsec:allencahn}

In this section we test the deep BSDE solver in the case of an $ 100 $-dimensional Allen-Cahn PDE 
with a cubic nonlinearity (see \eqref{eq:PDE_allencahn} below).

More specifically, assume the setting in the Subsection~\ref{sec:example_setting} and
assume for all
$
  s, t \in [0,T]
$, 
$
  x,
  w \in \R^d
$,
$
  y \in \R
$, 
$
  z \in \R^{ 1 \times d }
$,
$ m \in \N $
that
$
  \gamma_m =
    5 \cdot 10^{ - 4 }
$,
$ d = 100 $,
$ T = \frac{ 3 }{ 10 } $,
$ N = 20 $,
$
  \mu(t,x) = 0
$,
$
  \sigma(t,x) w = \sqrt{2} \, w
$,
$
  \xi =  ( 0, 0, \dots, 0 ) \in \R^d
$,
$
  \Upsilon( s, t, x, w )
  =
  x + \sqrt{2} \, w
$,
$
  f(t,x,y,z) = y - y^3
$,
and
$
  g(x) 
  = 
  \left[
    2 + 
    \frac{ 2 }{ 5 } \, \| x \|_{ \R^d }^2
  \right]^{ - 1 }
$.
Note that the solution $ u $ 
of the PDE \eqref{eq:PDE_numerics} then satisfies for all 
$
  t \in [0,T)
$, 
$
  x \in \R^d
$
that
$
  u(T,x) = g(x) 
$
and
\begin{equation}  
\label{eq:PDE_allencahn}
\begin{split}
&
   \frac{ \partial u}{ \partial t } ( t, x )
  + 
  u(t,x) - \left[ u(t,x) \right]^3
  +
  ( \Delta_x u )(t,x)
  = 0
  .
\end{split}     
\end{equation}
In Table~\ref{tab:1} we approximatively 
calculate the mean 
of 
$ \mathcal{U}^{ \Theta_m } $, 
the standard deviation
of 
$ \mathcal{U}^{ \Theta_m } $, 
the relative $ L^1 $-approximatin error
associated to 
$ \mathcal{U}^{ \Theta_m } $,
the standard deviation of 
the relative $ L^1 $-approximatin error
associated to 
$ \mathcal{U}^{ \Theta_m } $, 
and the runtime 
in seconds needed to calculate 
one realization of $ \mathcal{U}^{ \Theta_m } $
against 
$ m \in \{ 0, 1000, 2000, 3000, 4000 \} $
based on $ 5 $ independent realizations
($ 5 $ independent runs)
(see also the {\sc Python} code~\ref{code:deepPDEmethod} below).
Table~\ref{tab:1} also depicts 
the mean of the loss function associated to 
$ \Theta_m $ 
and the standard deviation of the loss 
function associated to 
$ \Theta_m $ 
against 
$ m \in \{ 0, 1000, 2000, 3000, 4000 \} $
based on $ 256 $ Monte Carlo samples 
and $ 5 $ independent realizations 
($ 5 $ independent runs).
In addition, 
the relative $ L^1 $-approximation error 
associated to
$ \mathcal{U}^{ \Theta_m } $
against 
$ m \in \{ 1, 2, 3, \dots, 4000 \} $
is pictured on the left hand side 
of Figure~\ref{fig:allen_cahn}
based on $ 5 $ independent realizations ($ 5 $ independent runs)
and
the mean of the loss function 
associated to 
$ \Theta_m $ 
against 
$ m \in \{ 1, 2, 3, \dots, 4000 \} $
is pictured on the right hand side of 
Figure~\ref{fig:allen_cahn}
based on $ 256 $ Monte Carlo samples 
and $ 5 $ independent realizations ($ 5 $ independent runs).
In the approximative computations of the relative $ L^1 $-approximation errors 
in Table~\ref{tab:1} and Figure~\ref{fig:allen_cahn}
the value 
$
  u(0,\xi) = u(0,0,\dots, 0) 
$
of the exact solution $ u $
of the PDE~\eqref{eq:PDE_allencahn} is replaced by
the value $ 0.052802 $ which, in turn, 
is calculated by means 
of the Branching diffusion method 
(see the {\sc Matlab} code~\ref{code:Branching} below 
and see, e.g., 
\cite{Labordere2012,HenryLabordereTanTouzi2014,Labordereetal2016} 
for analytical and numerical results 
for the Branching diffusion method in the literature). 
\begin{center}
\begin{table}[!ht]
\begin{center}
\begin{tabular}{|c|c|c|c|c|c|c|c|}
\hline
Number 
&
Mean
&
Standard
&
Relative
&
Standard
&
Mean
&
Standard
&
Runtime
\\
of
&
of
$ \mathcal{U}^{ \Theta_m } $
&
deviation
&
$ L^1 $-appr.
&
deviation
&
of the 
&
deviation
&
in sec.
\\
iteration
&
&
of $ \mathcal{U}^{ \Theta_m } $
&
error
&
of the
&
loss
&
of the 
&
for one
\\
steps $ m $
&
&
&
&
relative
&
function
&
loss
&
realization
\\
&
&
&
&
$ L^1 $-appr.
&
&
function
&
of $ \mathcal{U}^{ \Theta_m } $
\\
&
&
&
&
error
&
&
&
\\
\hline
0
&
0.4740
&
0.0514
&
7.9775
&
0.9734
&
0.11630
&
0.02953
&

\\
\hline
1000
&
0.1446
&
0.0340
&
1.7384
&
0.6436
&
0.00550
&
0.00344
&
201
\\
\hline
2000
&
0.0598
&
0.0058
&
0.1318
&
0.1103
&
0.00029
&
0.00006
&
348
\\
\hline
3000
&
0.0530
&
0.0002
&
0.0050
&
0.0041
&
0.00023
&
0.00001
&
500
\\
\hline
4000
&
0.0528
&
0.0002
&
0.0030
&
0.0022
&
0.00020
&
0.00001
&
647
\\
\hline
\end{tabular}
\end{center}
\caption{Numerical simulations for the deep BSDE solver 
in Subsection~\ref{sec:general_case} in the case of the PDE~\eqref{eq:PDE_allencahn}. 
\label{tab:1}}
\end{table}
\end{center}
\begin{figure}[ht]
\centering
\setcounter{subfigure}{0}
\subfigure[Relative $ L^1 $-approximation error]{\includegraphics[width=8cm]{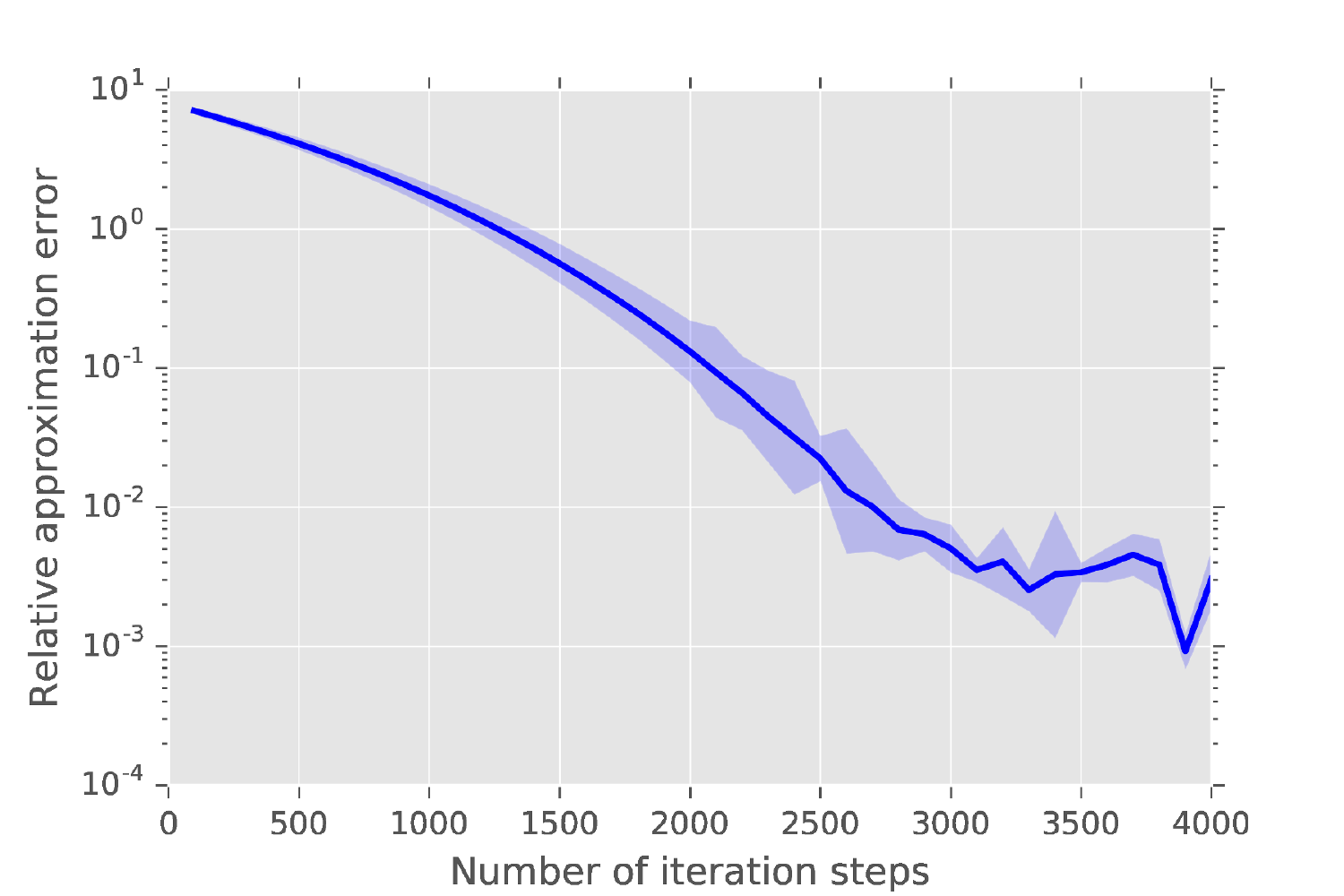}}
\subfigure[Mean of the loss function]{\includegraphics[width=8cm]{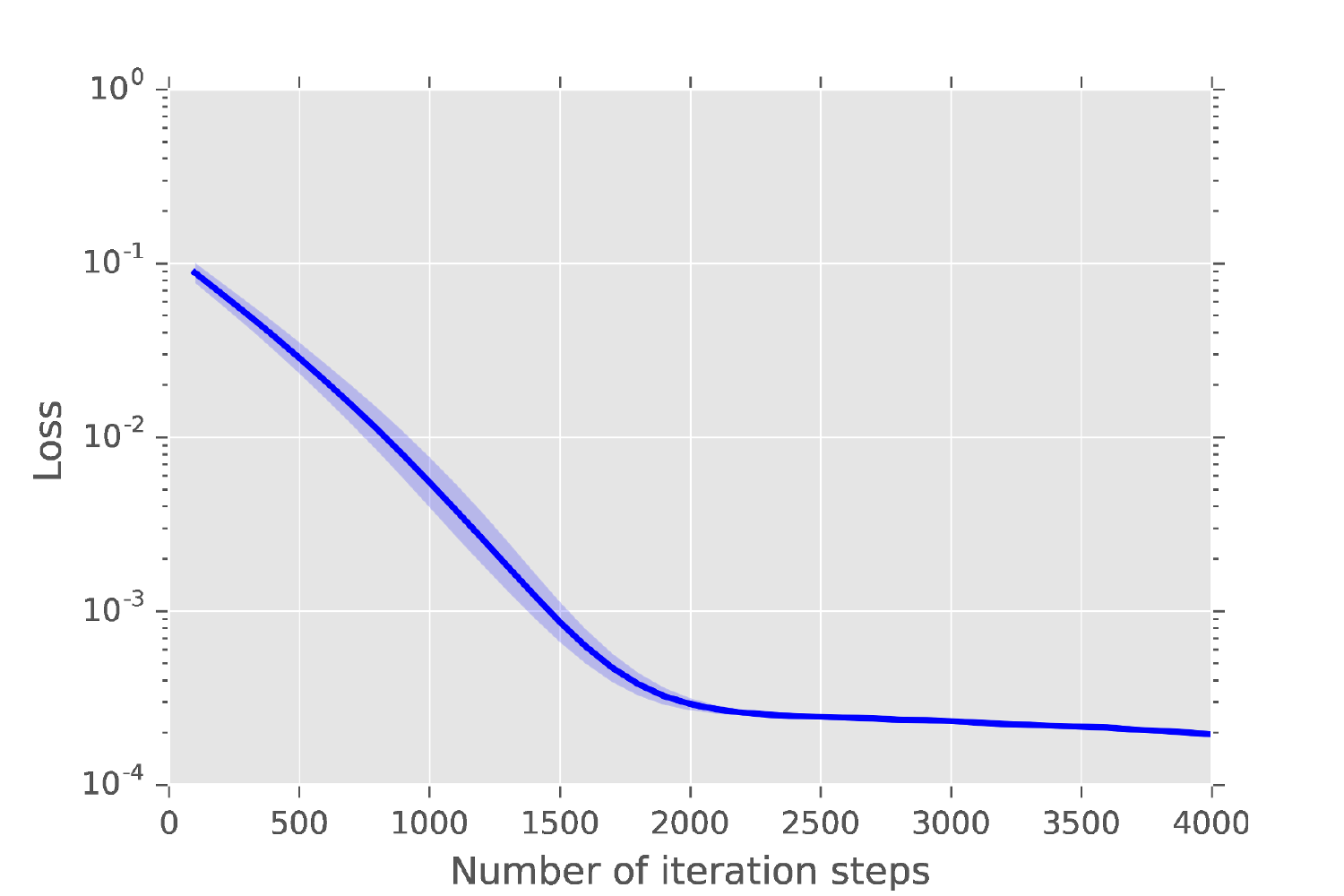}}
\caption{Relative $ L^1 $-approximation error 
of $ \mathcal{U}^{ \Theta_m } $
and mean of the loss function 
against $ m \in \{ 1, 2, 3, \dots, 4000 \} $
in the case of the PDE~\eqref{eq:PDE_allencahn}.
The deep BSDE approximation $ \mathcal{U}^{ \Theta_{ 4000 } } \approx u(0,\xi) $ achieves 
a relative $ L^1 $-approximation error of size $ 0.0030 $ in a runtime of $ 595 $ seconds.\label{fig:allen_cahn}}
\end{figure}

\subsection{A Hamilton-Jacobi-Bellman (HJB) equation}
\label{subsec:HJB}

In this subsection we apply the deep BSDE solver in Subsection~\ref{sec:general_case} 
to a Hamilton-Jacobi-Bellman (HJB) equation which admits an explicit 
solution that can be obtained through the Cole-Hopf transformation 
(cf., e.g., Chassagneux \& Richou~\cite[Section~4.2]{MR3449318}
and Debnath~\cite[Section~8.4]{MR2858125}).

Assume the setting in the Subsection~\ref{sec:example_setting} and
assume for all
$
  s, t \in [0,T]
$, 
$
  x,
  w \in \R^d
$,
$
  y \in \R
$, 
$
  z \in \R^{ 1 \times d }
$,
$ m \in \N $
that
$ d = 100 $,
$ T = 1 $,
$ N = 20 $,
$
  \gamma_m = \frac{ 1 }{ 100 }
$,
$
  \mu(t,x) = 0
$,
$
  \sigma(t,x) w = \sqrt{2} \, w
$,
$
  \xi =  ( 0, 0, \dots, 0 ) \in \R^d
$,
$
  \Upsilon( s, t, x, w )
  =
  x + \sqrt{2} \, w
$,
$
  f(t,x,y,z) = - \| z \|^2_{ \R^{ 1 \times d } }
$,
and
$
  g(x) 
  = 
  \ln(
    \frac{ 1 }{ 2 } 
    \, [ 1 + \| x \|^2_{ \R^d } ]
  )
$.
Note that the solution $ u $ 
of the PDE \eqref{eq:PDE_numerics} then satisfies for all 
$
  t \in [0,T)
$, 
$
  x \in \R^d
$
that
$
  u(T,x) = g(x) 
$
and
\begin{equation}  
\label{eq:PDE_HJB}
  \frac{ \partial u}{ \partial t } (t,x)
  + 
  ( \Delta_x u )(t,x)
  =
  \|
    ( \nabla_x u )(t,x)
  \|^2_{ \R^d }
  .
\end{equation}
In Table~\ref{tab:2} we approximatively 
calculate the mean 
of 
$ \mathcal{U}^{ \Theta_m } $, 
the standard deviation
of 
$ \mathcal{U}^{ \Theta_m } $, 
the relative $ L^1 $-approximatin error
associated to 
$ \mathcal{U}^{ \Theta_m } $,
the standard deviation of 
the relative $ L^1 $-approximatin error
associated to 
$ \mathcal{U}^{ \Theta_m } $, 
and the runtime 
in seconds needed to calculate 
one realization of $ \mathcal{U}^{ \Theta_m } $
against 
$ m \in \{ 0, 500, 1000, 1500, 2000 \} $
based on $ 5 $ independent realizations
($ 5 $ independent runs).
Table~\ref{tab:2} also depicts 
the mean of the loss function associated to 
$ \Theta_m $ 
and the standard deviation of the loss 
function associated to 
$ \Theta_m $ 
against 
$ m \in \{ 0, 500, 1000, 1500, 2000 \} $
based on $ 256 $ Monte Carlo samples 
and $ 5 $ independent realizations 
($ 5 $ independent runs).
In addition, 
the relative $ L^1 $-approximation error 
associated to
$ \mathcal{U}^{ \Theta_m } $
against 
$ m \in \{ 1, 2, 3, \dots, 2000 \} $
is pictured on the left hand side 
of Figure~\ref{fig:HJB}
based on $ 5 $ independent realizations ($ 5 $ independent runs)
and
the mean of the loss function 
associated to 
$ \Theta_m $ 
against 
$ m \in \{ 1, 2, 3, \dots, 2000 \} $
is pictured on the right hand side of 
Figure~\ref{fig:HJB}
based on $ 256 $ Monte Carlo samples 
and $ 5 $ independent realizations ($ 5 $ independent runs).
In the approximative computations of the relative $ L^1 $-approximation errors 
in Table~\ref{tab:2} and Figure~\ref{fig:HJB}
the value 
$
  u(0,\xi) = u(0,0,\dots, 0) 
$
of the exact solution $ u $
of the PDE~\eqref{eq:PDE_allencahn} is replaced by the value 
$
  4.5901 
$
which, in turn, is calculated by means 
of Lemma~\ref{lem:linearMC} below 
(with 
$ d = 100 $,
$ T = 1 $,
$ \alpha = 1 $,
$ \beta = -1 $,
$ 
  g = \R^d \ni x \mapsto 
  \ln(
    \frac{ 1 }{ 2 } 
    \, [ 1 + \| x \|^2_{ \R^d } ]
  )
  \in 
  \R
$
in the notation of Lemma~\ref{lem:linearMC})
and a classical Monte Carlo method
(see the {\sc Matlab} code~\ref{code:linearMC} below).
\begin{center}
\begin{table}[!ht]
\begin{center}
\begin{tabular}{|c|c|c|c|c|c|c|c|}
\hline
Number 
&
Mean
&
Standard
&
Relative
&
Standard
&
Mean
&
Standard
&
Runtime
\\
of
&
of
$ \mathcal{U}^{ \Theta_m } $
&
deviation
&
$ L^1 $-appr.
&
deviation
&
of the 
&
deviation
&
in sec.
\\
iteration
&
&
of $ \mathcal{U}^{ \Theta_m } $
&
error
&
of the
&
loss
&
of the 
&
for one
\\
steps $ m $
&
&
&
&
relative
&
function
&
loss
&
realization
\\
&
&
&
&
$ L^1 $-appr.
&
&
function
&
of $ \mathcal{U}^{ \Theta_m } $
\\
&
&
&
&
error
&
&
&
\\
\hline
0
&
0.3167
&
0.3059
&
0.9310
&
0.0666
&
18.4052
&
2.5090
&

\\
\hline
500
&
2.2785
&
0.3521
&
0.5036
&
0.0767
&
2.1789
&
0.3848
&
116
\\
\hline
1000
&
3.9229
&
0.3183
&
0.1454
&
0.0693
&
0.5226
&
0.2859
&
182
\\
\hline
1500
&
4.5921
&
0.0063
&
0.0013
&
0.006
&
0.0239
&
0.0024
&
248
\\
\hline
2000
&
4.5977
&
0.0019
&
0.0017
&
0.0004
&
0.0231
&
0.0026
&
330
\\
\hline
\end{tabular}
\end{center}
\caption{Numerical simulations for the deep BSDE solver 
in Subsection~\ref{sec:general_case} in the case of the PDE~\eqref{eq:PDE_HJB}. 
\label{tab:2}}
\end{table}
\end{center}
\begin{figure}[ht]
\centering
\setcounter{subfigure}{0}
\subfigure[Relative $ L^1 $-approximation error]{\includegraphics[width=8cm]{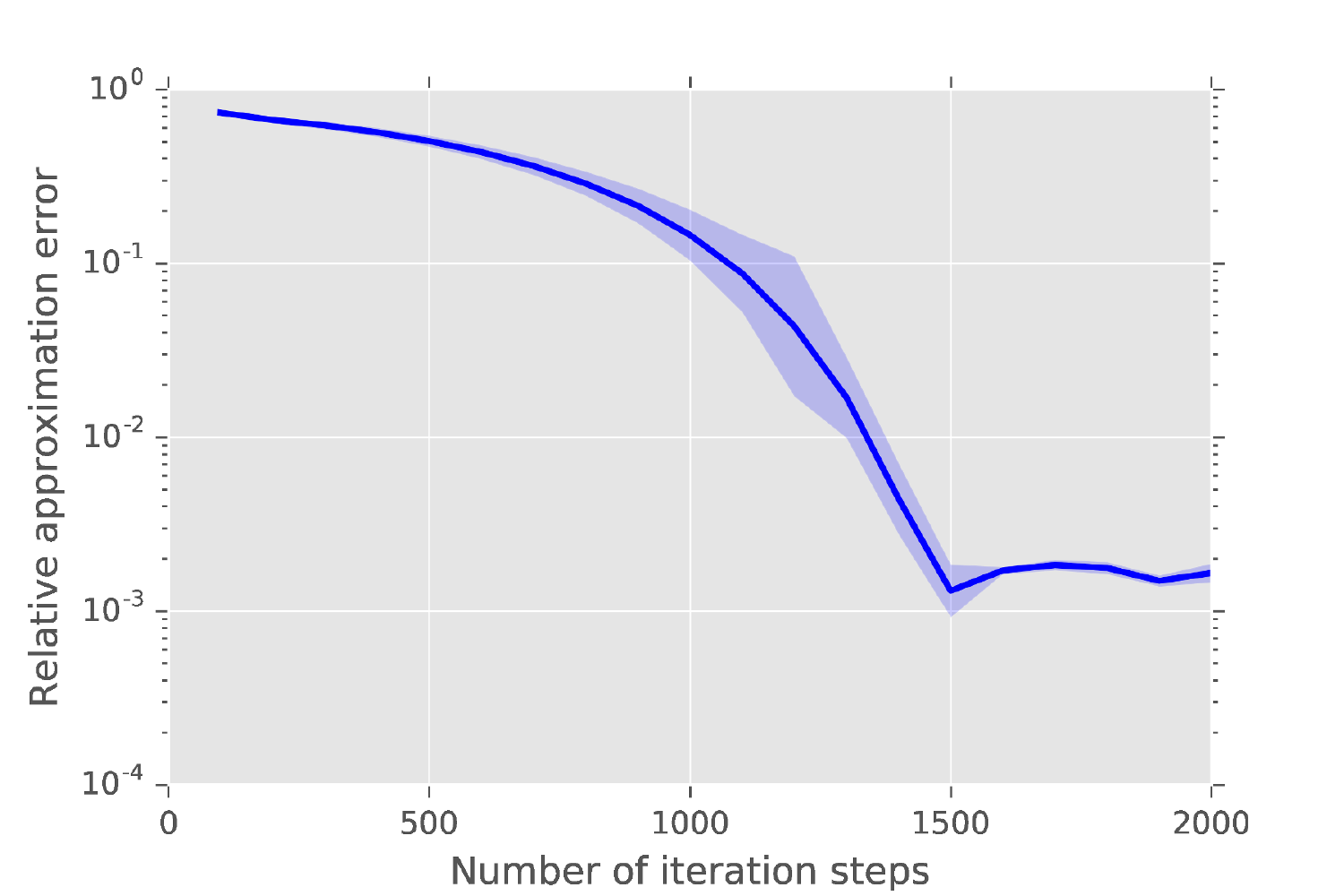}}
\subfigure[Mean of the loss function]{\includegraphics[width=8cm]{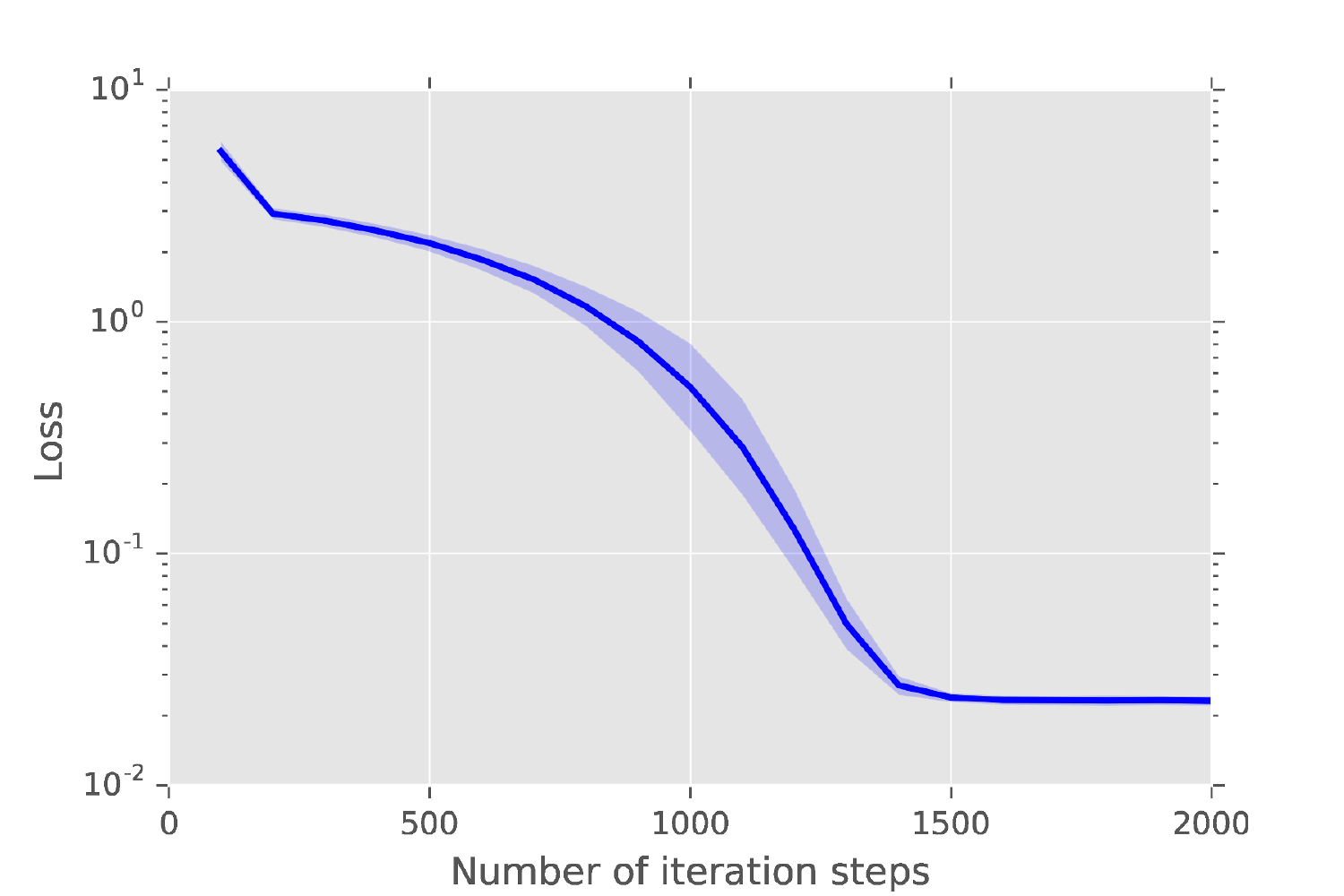}}
\caption{Relative $ L^1 $-approximation error 
of $ \mathcal{U}^{ \Theta_m } $
and mean of the loss function against $ m \in \{ 1, 2, 3, \dots, 2000 \} $.
The deep BSDE approximation $ \mathcal{U}^{ \Theta_{ 2000 } } \approx u(0,\xi) $ 
achieves a relative $ L^1 $-approximation error 
of size $ 0.0017 $ in a runtime of $ 283 $ seconds.
\label{fig:HJB}}
\end{figure}
\begin{lemma}[Cf., e.g., Section~4.2 in \cite{MR3449318} and Section~8.4 in \cite{MR2858125}]
\label{lem:linearMC}
Let $ d \in \N $, $ T, \alpha \in (0,\infty) $,
$ \beta \in \R \backslash \{ 0 \} $,
let $ ( \Omega, \mathcal{F}, \P ) $ be a probability space,
let $ W \colon [0,T] \times \Omega \to \R^d $ be a $ d $-dimensional standard Brownian motion,
let 
$ g 
\in C^2( \R^d, \R ) $
be a function which satisfies
$
  \sup_{ x \in \R^d } \left[ \beta g(x) \right] < \infty
$,
let $ f \colon [0,T] \times \R^d \times \R \times \R^d \to \R $
be the function which satisfies for all 
$ t \in [0,T] $, $ x = (x_1, \dots, x_d), z = ( z_1, \dots, z_d) \in \R^d $, $ y \in \R $
that
\begin{equation}
  f( t, x, y, z ) 
  =
  \beta \| z \|^2_{ \R^d }
  =
  \beta \smallsum_{ i = 1 }^d | z_i |^2
  ,
\end{equation}
and let $ u \colon [0,T] \times \R^d \to \R $
be the function which satisfies 
for all 
$ (t,x) \in [0,T] \times \R^d $ 
that
\begin{equation}
  u(t,x) = 
  \frac{ \alpha }{ \beta } 
  \ln\!\bigg( 
    \E\Big[ 
      \exp\!\Big( 
        \frac{ \beta g( x + W_{ T - t } \sqrt{ 2 \alpha } ) }{ \alpha } 
      \Big) 
    \Big] 
  \bigg)  
  .
\end{equation}
Then 
\begin{enumerate}[(i)]
\item 
\label{item:lem_i}
it holds that
$ u \colon [0,T] \times \R^d \to \R $ is a continuous function,
\item 
\label{item:lem_ii}
it holds that
$
  u|_{
    [0,T) \times \R^d
  } 
  \in C^{ 1, 2 }( [0,T) \times \R^d, \R )
$,
and 
\item
\label{item:lem_iii}
it holds for all $ (t,x) \in [0,T) \times \R^d $ that
$
  u(T,x) 
  = 
  g(x)
$
and
\begin{equation}
\begin{split}
&
  \frac{ \partial u}{ \partial t } (t,x)
  + 
  \alpha
  ( \Delta_x u )(t,x)
  +
  f\big( 
    t, x, u(t,x), ( \nabla_x u )( t, x ) 
  \big)
\\
&
=
  \frac{ \partial u}{ \partial t } (t,x)
  + 
  \alpha ( \Delta_x u )(t,x)
  +
  \beta 
  \|
     ( \nabla_x u )(t,x)
  \|^2_{ \R^d }
\\
&
=
  \frac{ \partial u}{ \partial t } (t,x)
  + 
  \alpha ( \Delta_x u )(t,x)
  +
  \beta 
  \sum_{ j = 1 }^d
  \Big|
      \frac{ \partial u}{ \partial x_j } 
    (t,x)
  \Big|^2
  = 0
  .
\end{split}
\end{equation}
\end{enumerate}
\end{lemma}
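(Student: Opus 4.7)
The plan is to use the classical Cole--Hopf linearization: introduce the auxiliary function
\begin{equation*}
  v(t,x) \coloneqq \exp\!\Big( \tfrac{\beta\, u(t,x)}{\alpha} \Big)
  = \E\!\Big[ \exp\!\Big( \tfrac{\beta\, g( x + W_{T-t}\sqrt{2\alpha} )}{\alpha} \Big) \Big],
\end{equation*}
show that $v$ solves the \emph{linear} backward heat equation $\partial_t v + \alpha \Delta_x v = 0$ on $[0,T) \times \R^d$ with terminal condition $v(T,\cdot) = \exp(\beta g(\cdot)/\alpha)$, and then transfer this back to $u = (\alpha/\beta) \ln v$ via the chain rule. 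The bound $\sup_{x\in\R^d}[\beta g(x)] < \infty$ will be used in several places: it makes the random variable $\exp(\beta g(x + W_{T-t}\sqrt{2\alpha})/\alpha)$ bounded above (and strictly positive), hence the expectation defining $v$ is finite, the logarithm defining $u$ is legitimate, and dominated convergence is applicable.

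First I would rewrite $v(t,x)$ for $t \in [0,T)$ as a Gaussian convolution against the heat kernel of covariance $2\alpha(T-t)\mathrm{Id}$, i.e.\ $v(t,x) = (p_{T-t} * h)(x)$ where $h(y) = \exp(\beta g(y)/\alpha)$ and $p_s$ is the density of $W_s\sqrt{2\alpha}$. Standard differentiation-under-the-integral arguments, using that $p_s$ and all its spatial and temporal derivatives decay fast enough and using the upper bound on $h$, give $v|_{[0,T)\times\R^d} \in C^{1,2}([0,T)\times\R^d,\R)$; the same Gaussian-kernel calculation shows that $v$ satisfies $\partial_t v + \alpha \Delta_x v = 0$ on $[0,T) \times \R^d$ (the backward heat equation is the standard Feynman--Kac representation for $W_{T-t}$). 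For continuity on the closed strip $[0,T] \times \R^d$ I would argue separately at $t = T$: as $t \uparrow T$, $W_{T-t}\sqrt{2\alpha} \to 0$ a.s., hence $\exp(\beta g(x + W_{T-t}\sqrt{2\alpha})/\alpha) \to \exp(\beta g(x)/\alpha)$ a.s., and dominated convergence (with the uniform upper bound and continuity of $g$) yields $v(t,x) \to \exp(\beta g(x)/\alpha)$; joint continuity in $(t,x)$ follows in the same way by continuity of $g$. This establishes items~\eqref{item:lem_i}--\eqref{item:lem_ii} for $v$, which pass to $u = (\alpha/\beta)\ln v$ since $v > 0$ everywhere.

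The PDE in item~\eqref{item:lem_iii} is then a direct chain-rule computation. From $v = \exp(\beta u/\alpha)$ one obtains
\begin{equation*}
  \partial_t v = \tfrac{\beta}{\alpha}\, v\, \partial_t u,
  \qquad
  \partial_{x_j} v = \tfrac{\beta}{\alpha}\, v\, \partial_{x_j} u,
  \qquad
  \partial_{x_j}^2 v = \tfrac{\beta}{\alpha}\, v\, \partial_{x_j}^2 u + \big(\tfrac{\beta}{\alpha}\big)^2 v\, |\partial_{x_j} u|^2,
\end{equation*}
so that $\partial_t v + \alpha \Delta_x v = \tfrac{\beta}{\alpha}\, v \bigl[ \partial_t u + \alpha \Delta_x u + \beta \|\nabla_x u\|^2_{\R^d} \bigr]$. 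Since the left-hand side vanishes on $[0,T)\times\R^d$ and $v > 0$, the bracketed expression vanishes, which is exactly the claimed nonlinear PDE. The terminal condition $u(T,x) = g(x)$ follows from $v(T,x) = \exp(\beta g(x)/\alpha)$ by taking logarithms.

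The main obstacle I anticipate is the regularity step: one has to justify differentiation under the expectation (equivalently, under the heat-kernel integral) carefully, using the boundedness of $h = \exp(\beta g/\alpha)$ from above together with the rapid decay of derivatives of the Gaussian kernel to produce integrable dominants uniformly on compact subsets of $[0,T)\times\R^d$. Positivity of $v$ (needed to take $\ln$ and to divide out in the chain rule) is cheap since the Gaussian density is strictly positive and $h > 0$, but it is worth flagging explicitly. Continuity at the terminal time is likewise routine once the uniform upper bound on $h$ and continuity of $g$ are in place. Everything else is bookkeeping.
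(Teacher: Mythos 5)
Your proposal is correct and follows essentially the same route as the paper's proof: the Cole--Hopf substitution $v=\exp(\beta u/\alpha)$, the observation that $v$ is the heat semigroup applied to the bounded positive function $\exp(\beta g/\alpha)$ (the paper simply invokes the Feynman--Kac formula where you spell out the Gaussian-kernel differentiation), dominated convergence for continuity up to $t=T$, and the chain rule to transfer the linear backward heat equation for $v$ into the quadratic-gradient PDE for $u$. The only cosmetic difference is that you differentiate $v$ in terms of $u$ while the paper differentiates $u$ in terms of $v$; the two computations are equivalent.
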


\begin{proof}[Proof of Lemma~\ref{lem:linearMC}]
Throughout this proof let 
$ c = \frac{ \alpha }{ \beta } \in \R \backslash \{ 0 \} $
and let 
$
  \mathcal{V} \colon \R^d \to (0,\infty)
$
and 
$ v \colon [0,T] \times \R^d \to (0,\infty) $
be the functions which satisfy 
for all $ t \in [0,T] $, $ x \in \R^d $ 
that
\begin{equation}
\begin{split} 
  \mathcal{V}( x ) 
 =
      \exp\!\Big( 
        \frac{ g( x ) }{ c } 
      \Big) 
  =
      \exp\!\Big( 
        \frac{ \beta g( x ) }{ \alpha } 
      \Big) 
  \qquad 
  \text{and}
  \qquad
    v(t,x) 
=
    \E\big[ 
      \mathcal{V}\big( 
        x + W_{ T - t } \sqrt{ 2 \alpha } 
      \big) 
    \big] 
  .
\end{split}
\end{equation}
Observe that the hypothesis 
that 
$
      \sup_{ x \in \R^d }
      \left[ 
        \beta g(x)
      \right]
      < \infty
$
ensures that for all $ \omega \in \Omega $ it holds that
\begin{equation}
\begin{split}
    \sup_{ t \in [0,T] }
    \sup_{ x \in \R^d }
    \big|
      \mathcal{V}\big( 
        x + W_{ T - t }(\omega) \sqrt{ 2 \alpha } 
      \big) 
    \big|
& \leq
    \sup_{ x \in \R^d }
    |
      \mathcal{V}( 
        x 
      ) 
    |
  =
    \sup_{ x \in \R^d }
      \mathcal{V}( 
        x 
      ) 
\\ &
  =
  \exp\!\left(
    \frac{
      \sup_{ x \in \R^d }
      \left[ 
        \beta g(x)
      \right]
    }{
      \alpha
    }
  \right)
  < \infty
  .
\end{split}
\end{equation}
Combining this with Lebesgue's theorem of dominated convergence 
ensures that $ v \colon [0,T] \times \R^d \to (0,\infty) $ is a continuous function. 
This and the fact that 
\begin{equation}
\label{eq:relation_v_u}
  \forall \, (t,x) \in [0,T] \times \R^d \colon
  u(t,x) = c \ln( v(t,x) )
\end{equation}
establish Item~\eqref{item:lem_i}. 
Next note that the Feynman-Kac formula ensures that
for all $ t \in [0,T) $, $ x \in \R^d $ 
it holds that
$
  v|_{ [0,T) \times \R^d }
  \in C^{ 1, 2 }( [0,T) \times \R^d , (0,\infty) )
$
and 
\begin{equation}
\label{eq:v_heat_equation}
  \frac{ \partial v }{ \partial t }( t, x )
  +
  \alpha
  ( \Delta_x v )( t, x )
  = 
  0
  .
\end{equation}
This and 
\eqref{eq:relation_v_u}
demonstrate Item~\eqref{item:lem_ii}.
It thus remains to prove Item~\eqref{item:lem_iii}.
For this note that the chain rule 
and \eqref{eq:relation_v_u} imply that
for all $ t \in [0,T) $, $ x = ( x_1, \dots, x_d ) \in \R^d $, 
$ i \in \{ 1, 2, \dots, d \} $
it holds that
\begin{equation}
\label{eq:u_time_derivative2}
  \frac{ \partial u }{ \partial t }(t,x)
  =
  \frac{ c }{ v(t,x) }
  \cdot 
  \frac{ \partial v }{ \partial t }(t,x)
\qquad
  \text{and}
\qquad
  \frac{ \partial u }{ \partial x_i }(t,x)
  =
  \frac{ c }{ v(t,x) }
  \cdot 
  \frac{ \partial v }{ \partial x_i }(t,x)
  .
\end{equation}
Again the chain rule and \eqref{eq:relation_v_u} hence 
ensure that
for all $ t \in [0,T) $, $ x = ( x_1, \dots, x_d ) \in \R^d $, 
$ i \in \{ 1, 2, \dots, d \} $
it holds that
\begin{equation}
  \frac{ \partial u^2 }{ \partial x_i^2 }(t,x)
  =
  \frac{ c }{ v(t,x) }
  \cdot 
  \frac{ \partial v^2 }{ \partial x_i^2 }(t,x)
  -
  \frac{ c }{ \left[ v(t,x) \right]^2 }
  \cdot 
  \left[
    \frac{ \partial v }{ \partial x_i }(t,x)
  \right]^2
  .
\end{equation}
This assures that
for all $ t \in [0,T) $, $ x = ( x_1, \dots, x_d ) \in \R^d $, 
$ i \in \{ 1, 2, \dots, d \} $
it holds that
\begin{equation}
\begin{split}
  \alpha ( \Delta_x u )(t,x)
& =
  \frac{ \alpha c }{ v(t,x) }
  \cdot 
  ( \Delta_x v )( t, x )
  -
  \frac{ \alpha c }{ \left[ v(t,x) \right]^2 }
  \cdot 
  \sum_{ i = 1 }^d
  \left[
    \frac{ \partial v }{ \partial x_i }(t,x)
  \right]^2
\\ 
& 
=
  \frac{ 
    \alpha c 
    ( \Delta_x v )( t, x )
  }{ v(t,x) }
  -
  \frac{ 
    \alpha c 
    \left\|
      ( \nabla_x v )( t, x )
    \right\|^2_{ \R^d }
  }{ \left[ v(t,x) \right]^2 }
  .
\end{split}
\end{equation}
Combining this with \eqref{eq:u_time_derivative2} 
demonstrates that 
for all $ t \in [0,T) $, $ x \in \R^d $
it holds that
\begin{equation}
\begin{split}
&
  \frac{ \partial u }{ \partial t }(t,x)
  +
  \alpha ( \Delta_x u )(t,x)
  +
  \beta
  \left\|
    ( \nabla_x u )( t, x )
  \right\|^2_{ \R^d }
\\
&
=
  \frac{ c }{ v(t,x) }
  \cdot 
  \frac{ \partial v }{ \partial t }(t,x)
  +
  \frac{ 
    \alpha c 
    ( \Delta_x v )( t, x )
  }{ v(t,x) }
  -
  \frac{ 
    \alpha c 
    \left\|
      ( \nabla_x v )( t, x )
    \right\|^2_{ \R^d }
  }{ \left[ v(t,x) \right]^2 }
  +
  \beta
  \left\|
    ( \nabla_x u )( t, x )
  \right\|^2_{ \R^d }
  .
\end{split}
\end{equation}
Equation~\eqref{eq:v_heat_equation} hence shows that
for all $ t \in [0,T) $, $ x = ( x_1, \dots, x_d ) \in \R^d $
it holds that
\begin{equation}
\begin{split}
&
  \frac{ \partial u }{ \partial t }(t,x)
  +
  \alpha ( \Delta_x u )(t,x)
  +
  \beta
  \left\|
    ( \nabla_x u )( t, x )
  \right\|^2_{ \R^d }
\\
&
=
  \beta
  \left\|
    ( \nabla_x u )( t, x )
  \right\|^2_{ \R^d }
  -
  \frac{ 
    \alpha c 
    \left\|
      ( \nabla_x v )( t, x )
    \right\|^2_{ \R^d }
  }{ \left[ v(t,x) \right]^2 }
\\ &
=
  \beta
  \left[
    \sum_{ i = 1 }^d
    \left|
      \frac{ \partial u }{ \partial x_i }( t, x )
    \right|^2
  \right]
  -
  \frac{ 
    \alpha c 
    \left\|
      ( \nabla_x v )( t, x )
    \right\|^2_{ \R^d }
  }{ \left[ v(t,x) \right]^2 }
  .
\end{split}
\end{equation}
This and \eqref{eq:u_time_derivative2} demonstrate that
for all $ t \in [0,T) $, $ x = ( x_1, \dots, x_d ) \in \R^d $
it holds that
\begin{equation}
\begin{split}
&
  \frac{ \partial u }{ \partial t }(t,x)
  +
  \alpha ( \Delta_x u )(t,x)
  +
  \beta
  \left\|
    ( \nabla_x u )( t, x )
  \right\|^2_{ \R^d }
\\ &
=
  \beta
  \left[
    \sum_{ i = 1 }^d
    \left|
      \frac{ c }{ v(t,x) } 
      \cdot 
      \frac{ \partial v }{ \partial x_i }( t, x )
    \right|^2
  \right]
  -
  \frac{ 
    \alpha c 
    \left\|
      ( \nabla_x v )( t, x )
    \right\|^2_{ \R^d }
  }{ \left[ v(t,x) \right]^2 }
\\ &
=
  \frac{ 
    c^2 \beta 
  }{ 
    \left[ v(t,x) \right]^2 
  }
  \left[
    \sum_{ i = 1 }^d
    \left|
      \frac{ \partial v }{ \partial x_i }( t, x )
    \right|^2
  \right]
  -
  \frac{ 
    \alpha c 
    \left\|
      ( \nabla_x v )( t, x )
    \right\|^2_{ \R^d }
  }{ 
    \left[ v(t,x) \right]^2 
  }
\\ &
  =
  \frac{
    \left[ 
      c^2 \beta - c \alpha
    \right] 
    \left\|
      ( \nabla_x v )( t, x )
    \right\|^2_{ \R^d }
  }{ 
    \left[ v(t,x) \right]^2 
  }
  = 0
  .
\end{split}
\end{equation}
This and the fact that 
\begin{equation}
  \forall \, x \in \R^d \colon
  u(T,x)
  =
  c \ln( v(T,x) )
  =
  c \ln( \mathcal{V}(x) )
  =
  c 
  \ln\!\left(
    \exp\!\left(
      \frac{ g(x) }{ c }
    \right)
  \right)
  =
  g(x)
\end{equation}
establish Item~\eqref{item:lem_iii}. 
The proof of Lemma~\ref{lem:linearMC} is thus completed.
\end{proof}

\subsection{Pricing of European financial derivatives with different interest rates 
for borrowing and lending}
\label{subsec:borrowlend}

In this subsection we apply the deep BSDE solver to a pricing problem 
of an European financial derivative in a financial market where the risk 
free bank account used for the hedging of the financial 
derivative has different interest rates for borrowing and lending
(see Bergman~\cite{Bergman1995} and, 
e.g., \cite{GobetLemorWarin2005,BenderDenk2007,
BenderSchweizerZhuo2014,BriandLabart2014,CrisanManolarakis2012,Eetal2017}
where this example has been used as a test example 
for numerical methods for BSDEs).

Assume the setting in Subsection~\ref{sec:example_setting}, 
let 
$ 
  \bar{\mu} = \frac{ 6 }{ 100 } 
$,
$
  \bar{\sigma} = \frac{ 2 }{ 10 } 
$, 
$
  R^l = \frac{ 4 }{ 100 } 
$, 
$
  R^b = \frac{ 6 }{ 100 } 
$, 
and assume 
for all
$
  s, t \in [0,T]
$, 
$
  x = ( x_1, \dots, x_d ),
  w = ( w_1, \dots, w_d ) 
  \in \R^d
$, 
$
  y \in \R
$, 
$
  z \in \R^{ 1 \times d } 
$,
$ m \in \N $
that 
$
  d = 100
$,
$
  T = \nicefrac{ 1 }{ 2 } 
$,
$ N = 20 $,
$ 
  \gamma_m = 5 \cdot 10^{ - 3 } = 0.005 
$,
$
  \mu( t, x ) = \bar{\mu} x
$,
$
  \sigma( t, x ) = 
  \bar{\sigma} \operatorname{diag}_{ \R^{ d \times d } }( x_1, \dots, x_d ) 
$,
$
  \xi = ( 100, 100, \dots, 100 ) \in \R^d
$,
and
\begin{equation}
  g(x) = 
  \max\left\{
    \left[ 
      \max_{ 1 \leq i \leq 100 } x_i 
    \right] 
    - 120 , 0 
  \right\} 
  - 
  2 
  \max\!\left\{ 
    \left[
      \max_{ 1 \leq i \leq 100 } x_i 
    \right]
    - 150 , 0
  \right\} 
  ,
\end{equation}
\begin{equation}
  \Upsilon( s, t, x, w )
  =
  \exp\!\left(
    \left( \bar{\mu} - \frac{ \bar{\sigma}^2 }{ 2 } \right) ( t - s )
  \right)
  \exp\!\big(
    \bar{\sigma} 
    \operatorname{diag}_{ \R^{ d \times d } }( w_1, \dots, w_d )
  \big)
  \,
  x
  ,
\end{equation}
\begin{equation}
  f(t,x,y,z)
  =
  - 
  R^l y 
  - 
  \frac{ ( \bar{\mu} - R^l ) }{ \bar{\sigma} }
  \sum_{ i = 1 }^d z_i
  +
  ( R^b - R^l )
  \max\!\left\{
    0 ,
    \left[ 
      \frac{ 1 }{ \bar{\sigma} } 
      \sum_{ i = 1 }^d z_i
    \right]
    - y
  \right\}.
\end{equation}
Note that the solution $ u $ of the 
PDE~\eqref{eq:PDE_numerics}
then satisfies for all 
$ 
  t \in [0,T) 
$, 
$ 
  x = (x_1, x_2, \ldots, x_d) \in \R^d 
$
that
$
  u(T,x) = g(x) 
$
and
\begin{multline}  
\label{eq:PDE_borrowlend_not_explicit}
   \frac{ \partial u}{ \partial t } ( t, x )
  + 
  f\big( 
    t, x, 
    u(t,x),
    \bar{\sigma} 
    \operatorname{diag}_{ \R^{ d \times d } }(x_1, \dots, x_d) 
    ( \nabla_x u )( t, x )
  \big)
  +
  \bar{\mu} \sum_{i=1}^d x_i \, \frac{\partial u}{\partial x_i}(t,x)
\\
  +
  \frac{\bar{\sigma}^2}{ 2 }
  \sum_{i=1}^d
  | x_i |^2
  \, 
    \frac{ \partial^2 u}{ \partial x^2_i }(t,x)
  = 0
  .
\end{multline}
Hence, we obtain
for all $ t \in [0,T) $, 
$ x = (x_1,x_2,\ldots,x_d)\in \R^d $
that
$
  u(T,x) = g(x) 
$
and
\begin{multline}  
\label{eq:PDE_borrowlend_max}
  \frac{ \partial u}{ \partial t } ( t, x )
  +
  \frac{ \bar{\sigma}^2 }{ 2 }
  \sum_{ i = 1 }^d
  |x_i|^2
  \,
    \frac{ \partial^2 u}{ \partial x^2_i }( t, x ) 
\\
  +
  \max\!\left\{
    R^b 
    \left(
    \left[
      \smallsum_{ i = 1 }^d x_i 
      \,
      \big(
        \frac{ \partial u}{ \partial x_i } 
      \big)(t,x)
    \right]
      -
      u(t,x) 
    \right)
    ,
    R^l
    \left( 
      \left[
      \smallsum_{ i = 1 }^d 
      x_i 
      \,
      \big(
        \frac{ \partial u}{ \partial x_i } 
      \big)(t,x)
      \right]
      -
      u(t,x)
    \right)
  \right\}
  = 0
  .  
\end{multline}
This shows that
for all $ t \in [0,T) $, 
$ x = (x_1,x_2,\ldots,x_d)\in \R^d $
it holds that
$
  u(T,x) = g(x) 
$
and
\begin{multline}  
\label{eq:PDE_borrowlend}
  \frac{ \partial u}{ \partial t } ( t, x )
  +
  \frac{ \bar{\sigma}^2 }{ 2 }
  \sum_{ i = 1 }^d
  |x_i|^2
  \,
    \frac{ \partial^2 u}{ \partial x^2_i } 
  (t,x)
\\
  -
  \min\!\bigg\{
    R^b \bigg(
      u(t,x) - 
      \sum_{ i = 1 }^d x_i 
      \,
        \frac{ \partial u}{ \partial x_i } 
      (t,x)
    \bigg)
    ,
    R^l
    \bigg( 
      u(t,x) - 
      \sum_{ i = 1 }^d 
      x_i 
      \,
        \frac{ \partial u}{ \partial x_i } 
      (t,x)
    \bigg)
  \bigg\}
  = 0
  .  
\end{multline}
In Table~\ref{tab:3} we approximatively 
calculate the mean 
of 
$ \mathcal{U}^{ \Theta_m } $, 
the standard deviation
of 
$ \mathcal{U}^{ \Theta_m } $, 
the relative $ L^1 $-approximatin error
associated to 
$ \mathcal{U}^{ \Theta_m } $,
the standard deviation of 
the relative $ L^1 $-approximatin error
associated to 
$ \mathcal{U}^{ \Theta_m } $, 
and the runtime 
in seconds needed to calculate 
one realization of $ \mathcal{U}^{ \Theta_m } $
against 
$ m \in \{ 0, 1000, 2000, 3000, 4000 \} $
based on $ 5 $ independent realizations
($ 5 $ independent runs).
Table~\ref{tab:3} also depicts 
the mean of the loss function associated to 
$ \Theta_m $ 
and the standard deviation of the loss 
function associated to 
$ \Theta_m $ 
against 
$ m \in \{ 0, 1000, 2000, 3000, 4000 \} $
based on $ 256 $ Monte Carlo samples 
and $ 5 $ independent realizations 
($ 5 $ independent runs).
In addition, 
the relative $ L^1 $-approximation error 
associated to
$ \mathcal{U}^{ \Theta_m } $
against 
$ m \in \{ 1, 2, 3, \dots, 4000 \} $
is pictured on the left hand side 
of Figure~\ref{fig:borrowlend}
based on $ 5 $ independent realizations ($ 5 $ independent runs)
and
the mean of the loss function 
associated to 
$ \Theta_m $ 
against 
$ m \in \{ 1, 2, 3, \dots, 4000 \} $
is pictured on the right hand side of 
Figure~\ref{fig:borrowlend}
based on $ 256 $ Monte Carlo samples 
and $ 5 $ independent realizations ($ 5 $ independent runs).
In the approximative computations of the relative $ L^1 $-approximation errors 
in Table~\ref{tab:3} and Figure~\ref{fig:borrowlend}
the value 
$
  u(0,\xi) = u(0,0,\dots, 0) 
$
of the exact solution $ u $
of the PDE~\eqref{eq:PDE_borrowlend} 
is replaced by the value
$ 21.299 $
which, in turn, is calculated by means 
of the multilevel-Picard approximation method 
in E et al.~\cite{Eetal2017}
(see \cite[$ \rho = 7 $ in Table~6 in Section~4.3]{Eetal2017}).
\begin{center}
\begin{table}[!ht]
\begin{center}
\begin{tabular}{|c|c|c|c|c|c|c|c|}
\hline
Number 
&
Mean
&
Standard
&
Relative
&
Standard
&
Mean
&
Standard
&
Runtime
\\
of
&
of
$ \mathcal{U}^{ \Theta_m } $
&
deviation
&
$ L^1 $-appr.
&
deviation
&
of the 
&
deviation
&
in sec.
\\
iteration
&
&
of $ \mathcal{U}^{ \Theta_m } $
&
error
&
of the
&
loss
&
of the 
&
for one
\\
steps $ m $
&
&
&
&
relative
&
function
&
loss
&
realization
\\
&
&
&
&
$ L^1 $-appr.
&
&
function
&
of $ \mathcal{U}^{ \Theta_m } $
\\
&
&
&
&
error
&
&
&
\\
\hline
0
&
16.964
&
0.882
&
0.204
&
0.041
&
53.666
&
8.957
&

\\
\hline
1000
&
20.309
&
0.524
&
0.046
&
0.025
&
30.886
&
3.076
&
194
\\
\hline
2000
&
21.150
&
0.098
&
0.007
&
0.005
&
29.197
&
3.160
&
331
\\
\hline
3000
&
21.229
&
0.034
&
0.003
&
0.002
&
29.070
&
3.246
&
470
\\
\hline
4000
&
21.217
&
0.043
&
0.004
&
0.002
&
29.029
&
3.236
&
617
\\
\hline
\end{tabular}
\end{center}
\caption{Numerical simulations for the deep BSDE solver 
in Subsection~\ref{sec:general_case} in the case of the PDE~\eqref{eq:PDE_borrowlend}.
\label{tab:3}}
\end{table}
\end{center}
\begin{figure}[ht]
\centering
\setcounter{subfigure}{0}
\subfigure[Relative $ L^1 $-approximation error]{\includegraphics[width=8cm]{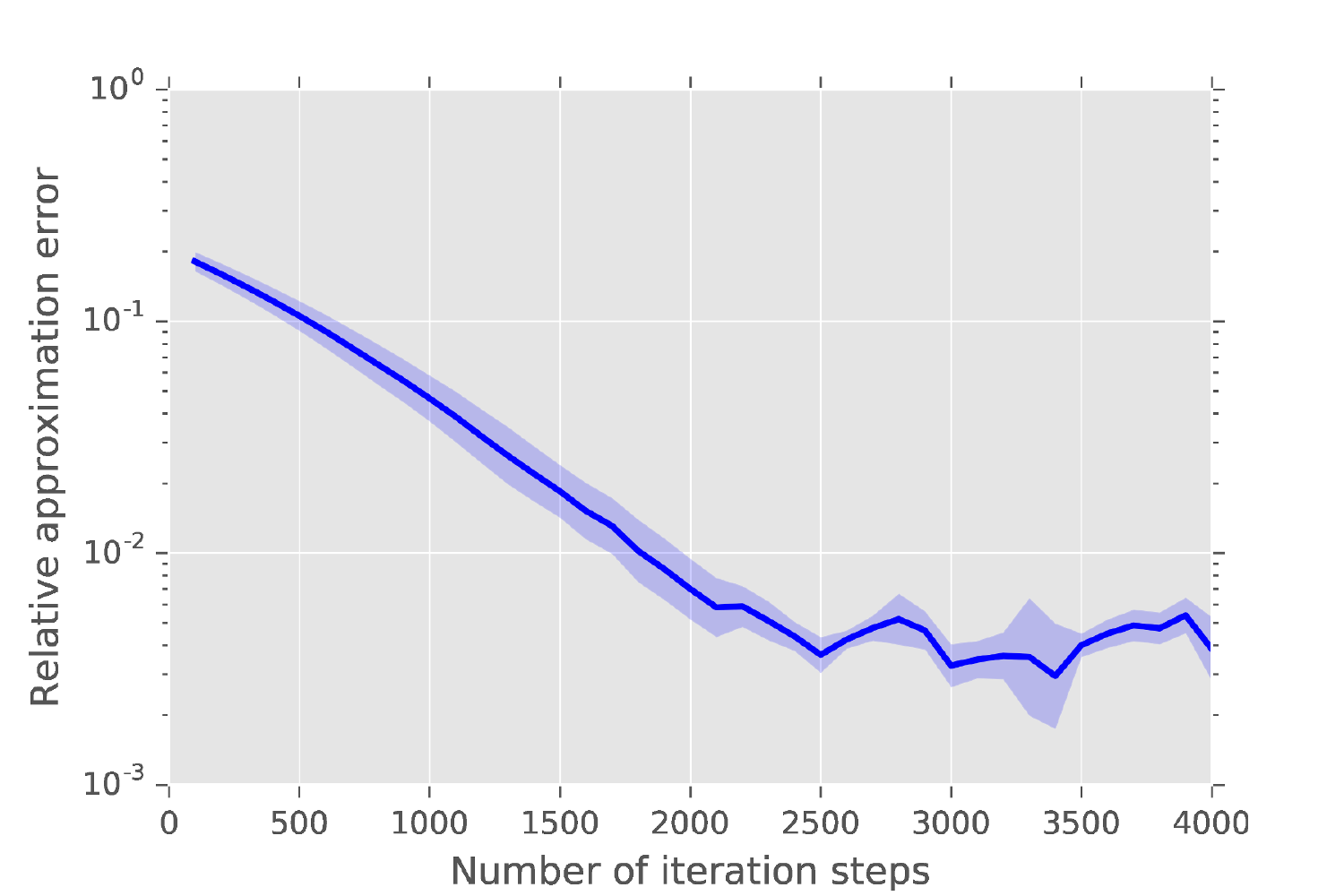}}
\subfigure[Mean of the loss function]{\includegraphics[width=8cm]{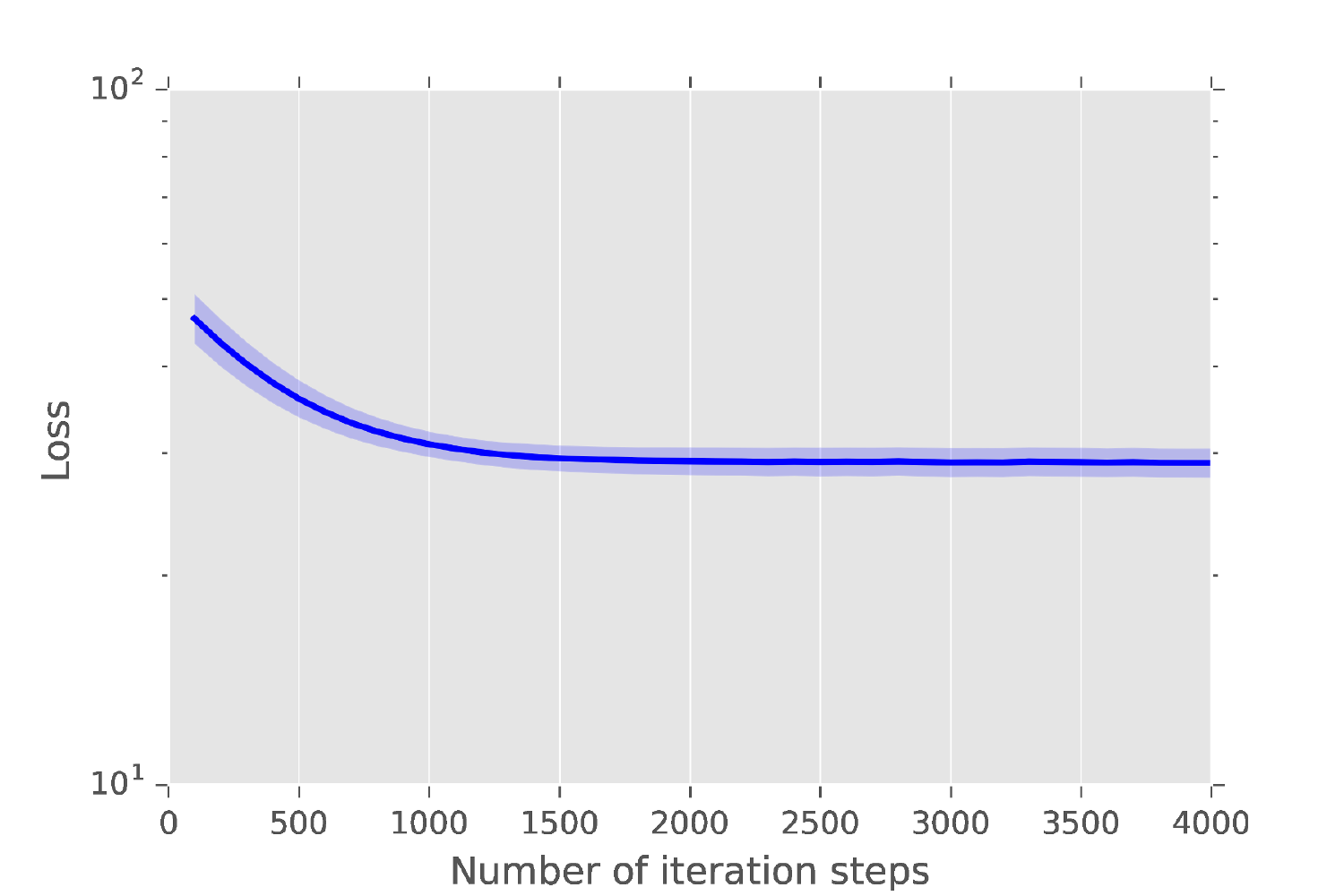}}
\caption{Relative $ L^1 $-approximation error 
of $ \mathcal{U}^{ \Theta_m } $
and mean of the loss function against $ m \in \{ 1, 2, 3, \dots, 4000 \} $
in the case of the PDE~\eqref{eq:PDE_borrowlend}.
The deep BSDE approximation $ \mathcal{U}^{ \Theta_{ 4000 } } \approx u(0,\xi) $ 
achieves a relative $ L^1 $-approximation error 
of size $ 0.0039 $ in a runtime of $ 566 $ seconds.
\label{fig:borrowlend}}
\end{figure}

\subsection{Multidimensional Burgers-type PDEs with explicit solutions}

In this subsection we consider a high-dimensional version 
of the example analyzed numerically in 
Chassagneux~\cite[Example~4.6 in Subsection~4.2]{MR3284573}.

More specifically, assume the setting in Subsection~\ref{sec:example_setting}, 
and assume 
for all
$
  s, t \in [0,T]
$, 
$
  x = ( x_1, \dots, x_d ),
  w = ( w_1, \dots, w_d ) 
  \in \R^d
$, 
$
  y \in \R
$, 
$
  z = ( z_i )_{ i \in \{ 1, 2, \dots, d \} } \in \R^{ 1 \times d } 
$
that 
$
  \mu( t, x ) = 0
$,
$
  \sigma( t, x ) w = 
  \frac{ d }{ \sqrt{2} } w
$,
$
  \xi = ( 0, 0, \dots, 0 ) \in \R^d
$,
$
  \Upsilon( s, t, x, w )
  =
  x + \frac{ d }{ \sqrt{2} } w
$,
and
\begin{equation}
  g(x) 
  =   
  \frac{ 
    \exp( 
      T + \frac{ 1 }{ d } \sum_{ i = 1 }^d x_i 
    )  
  }{ 
    \big( 
      1 + \exp( T + \frac{ 1 }{ d } \sum_{ i = 1 }^d x_i ) 
    \big) 
  }
  ,
\qquad
  f(t,x,y,z)
  =
  \left(
    y - \frac{ 2 + d }{ 2 d }
  \right)
  \left(
    \smallsum\limits_{ i = 1 }^d z_i
  \right)
  .
\end{equation}
Note that the solution $ u $ of the 
PDE~\eqref{eq:PDE_numerics}
then satisfies for all 
$ 
  t \in [0,T) 
$, 
$ 
  x = (x_1, x_2, \ldots, x_d) \in \R^d 
$
that
$
  u(T,x) = g(x) 
$
and
\begin{equation}
\label{eq:PDE_JF}
\frac{\partial u}{\partial t}(t,x)
  +
  \frac{d^2}{2} ( \Delta_x u )(t,x)
  +
  \left(
    u(t,x) - \frac{ 2 + d }{ 2d }
  \right)
  \left(
    d \smallsum\limits_{ i = 1 }^d 
    \displaystyle
    \frac{ \partial u}{ \partial x_i }( t, x ) 
  \right) = 0
\end{equation}
(cf.\ Lemma~\ref{lem:JF} below 
[with $ \alpha = d^2 $, $ \kappa = \nicefrac{ 1 }{ d } $
in the notation of Lemma~\ref{lem:JF} below]).
%
%
On the left hand side of Figure~\ref{fig:JF}
we present approximatively 
the relative $ L^1 $-approximatin error
associated to 
$ \mathcal{U}^{ \Theta_m } $
against 
$ m \in \{ 1, 2, 3, \dots, 60 \, 000 \} $
based on $ 5 $ independent realizations
($ 5 $ independent runs)
in the case 
\begin{equation}
\label{eq:parameter_JF}
  T = 1
  ,
  \qquad
  d = 20
  ,
  \qquad
  N = 80
  ,
  \qquad
  \forall \, m \in \N
  \colon 
  \gamma_m 
  =
  10^{
    (
      \mathbbm{1}_{ [1,30000] }( m )
      +
      \mathbbm{1}_{ [1,50000] }( m )
      - 4
    )
  }
  .
\end{equation}
On the right hand side of Figure~\ref{fig:JF}
we present approximatively 
the mean of the loss function 
associated to 
$ \Theta_m $ 
against 
$ m \in \{ 1, 2, 3, \dots, 60 \, 000 \} $
based on $ 256 $ Monte Carlo samples 
and $ 5 $ independent realizations ($ 5 $ independent runs)
in the case \eqref{eq:parameter_JF}.
On the left hand side of Figure~\ref{fig:JFb}
we present approximatively 
the relative $ L^1 $-approximatin error
associated to 
$ \mathcal{U}^{ \Theta_m } $
against 
$ m \in \{ 1, 2, 3, \dots, 30 \, 000 \} $
based on $ 5 $ independent realizations
($ 5 $ independent runs)
in the case 
\begin{equation}
\label{eq:parameter_JFb}
  T = \frac{ 2 }{ 10 }
  ,
  \qquad
  d = 50
  ,
  \qquad
  N = 30
  ,
  \qquad
  \forall \, m \in \N
  \colon 
  \gamma_m 
  =
  10^{
    (
      \mathbbm{1}_{ [1,15000] }( m )
      +
      \mathbbm{1}_{ [1,25000] }( m )
      - 4
    )
  }
  .
\end{equation}
On the right hand side of Figure~\ref{fig:JFb}
we present approximatively 
the mean of the loss function 
associated to 
$ \Theta_m $ 
against 
$ m \in \{ 1, 2, 3, \dots, 30 \, 000 \} $
based on $ 256 $ Monte Carlo samples 
and $ 5 $ independent realizations ($ 5 $ independent runs)
in the case \eqref{eq:parameter_JFb}.

\begin{figure}[ht]
\centering
\setcounter{subfigure}{0}
\subfigure[Relative $ L^1 $-approximation error]{\includegraphics[width=8cm]{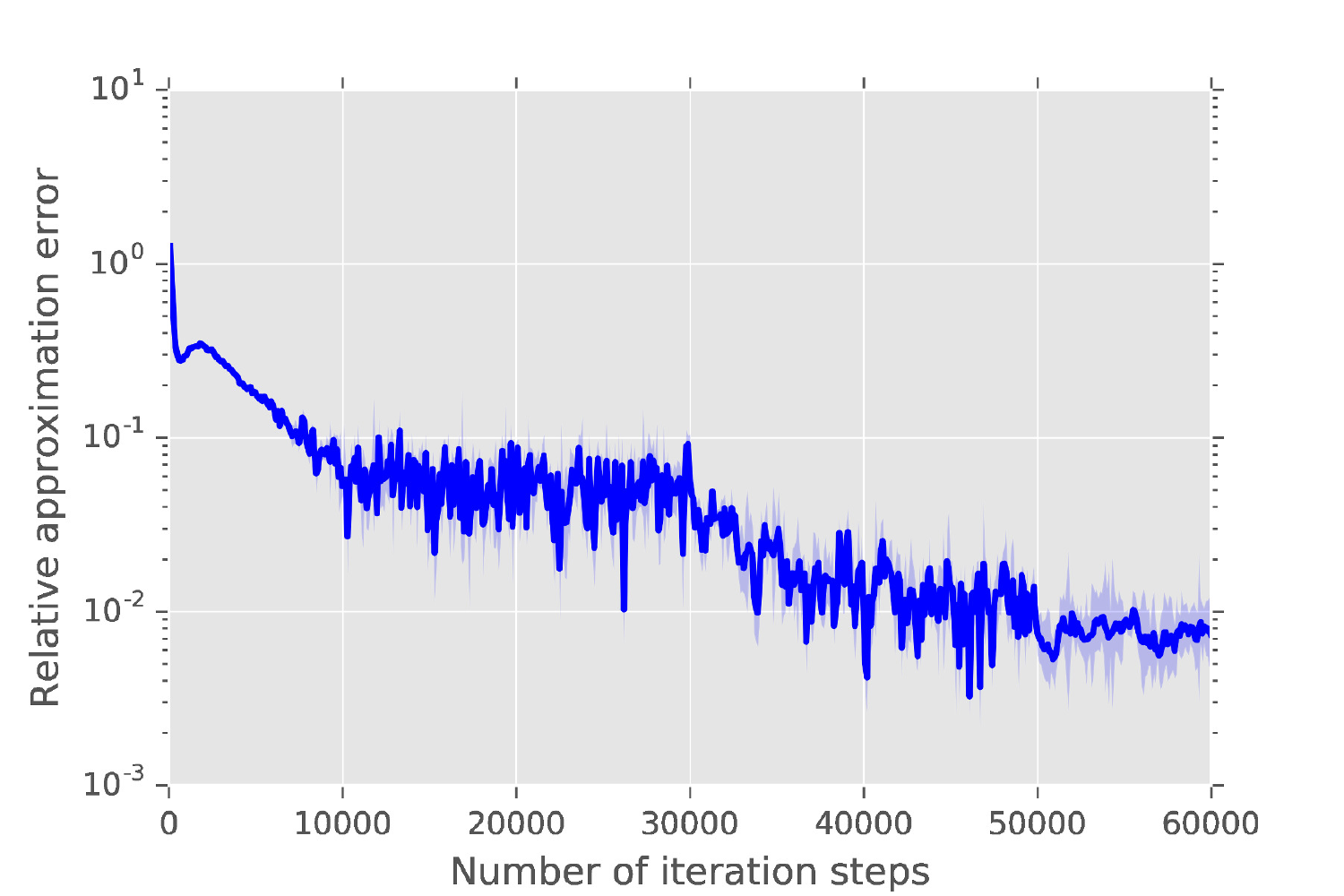}}
\subfigure[Mean of the loss function]{\includegraphics[width=8cm]{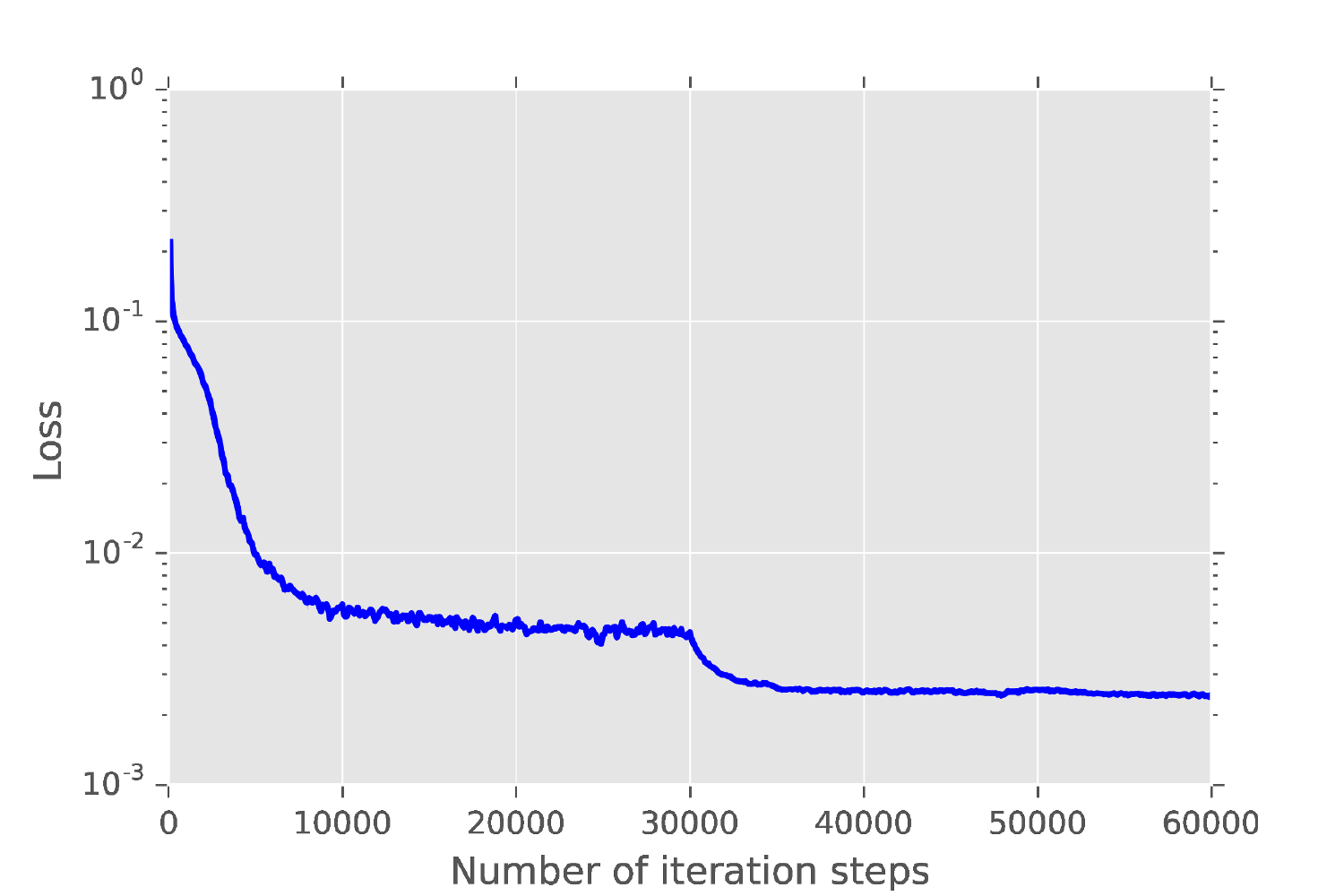}}
\caption{Relative $ L^1 $-approximation error 
of $ \mathcal{U}^{ \Theta_m } $
and mean of the loss function against $ m \in \{ 1, 2, 3, \dots, 60 \, 000 \} $
in the case of the PDE~\eqref{eq:PDE_JF} with \eqref{eq:parameter_JF}.
The deep BSDE approximation $ \mathcal{U}^{ \Theta_{ 60 \, 000 } } \approx u(0,\xi) $ 
achieves a relative $ L^1 $-approximation error 
of size $ 0.0073 $ in a runtime of $ 20 \, 389 $ seconds.
\label{fig:JF}}
\end{figure}

\begin{figure}[ht]
\centering
\setcounter{subfigure}{0}
\subfigure[Relative $ L^1 $-approximation error]{\includegraphics[width=8cm]{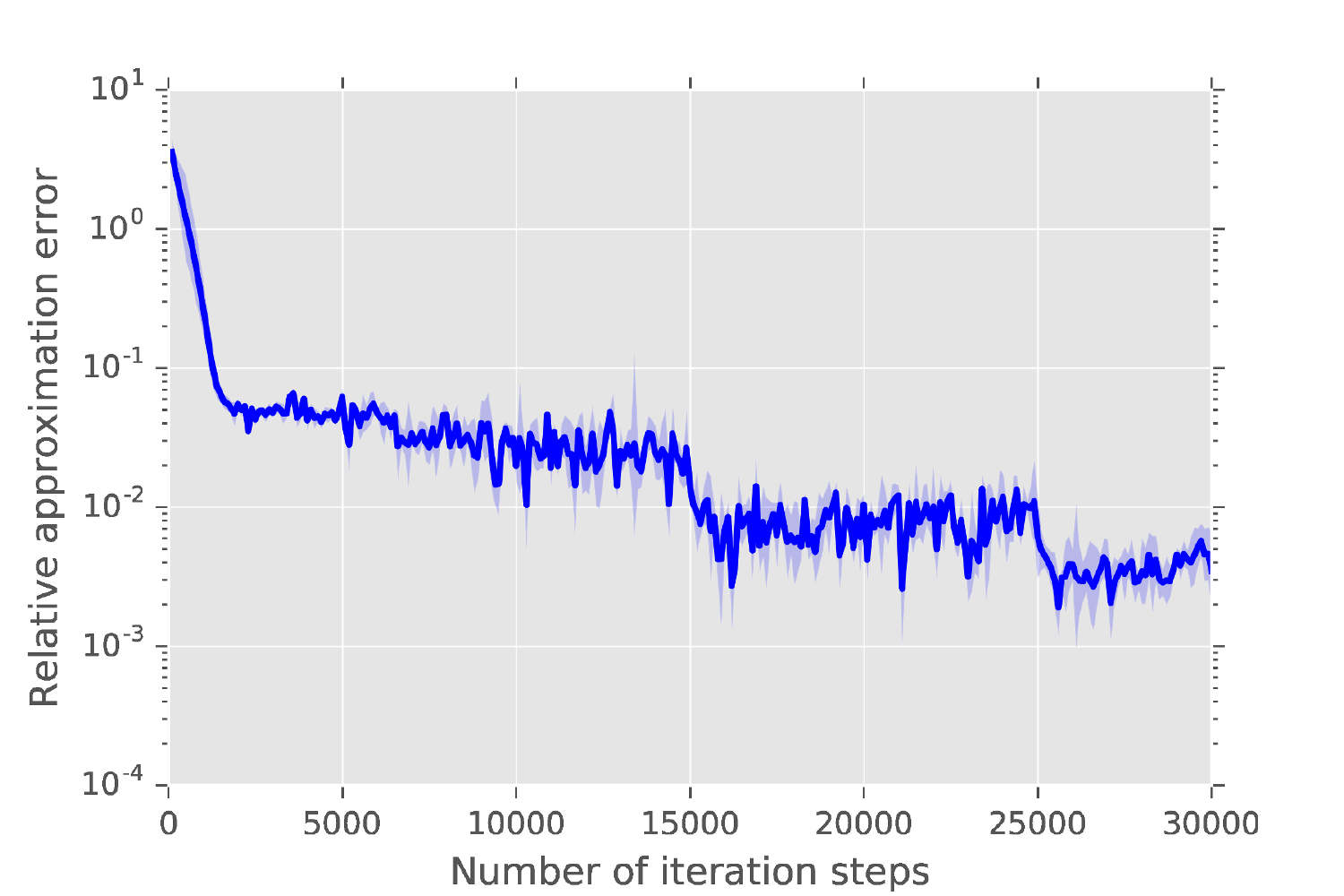}}
\subfigure[Mean of the loss function]{\includegraphics[width=8cm]{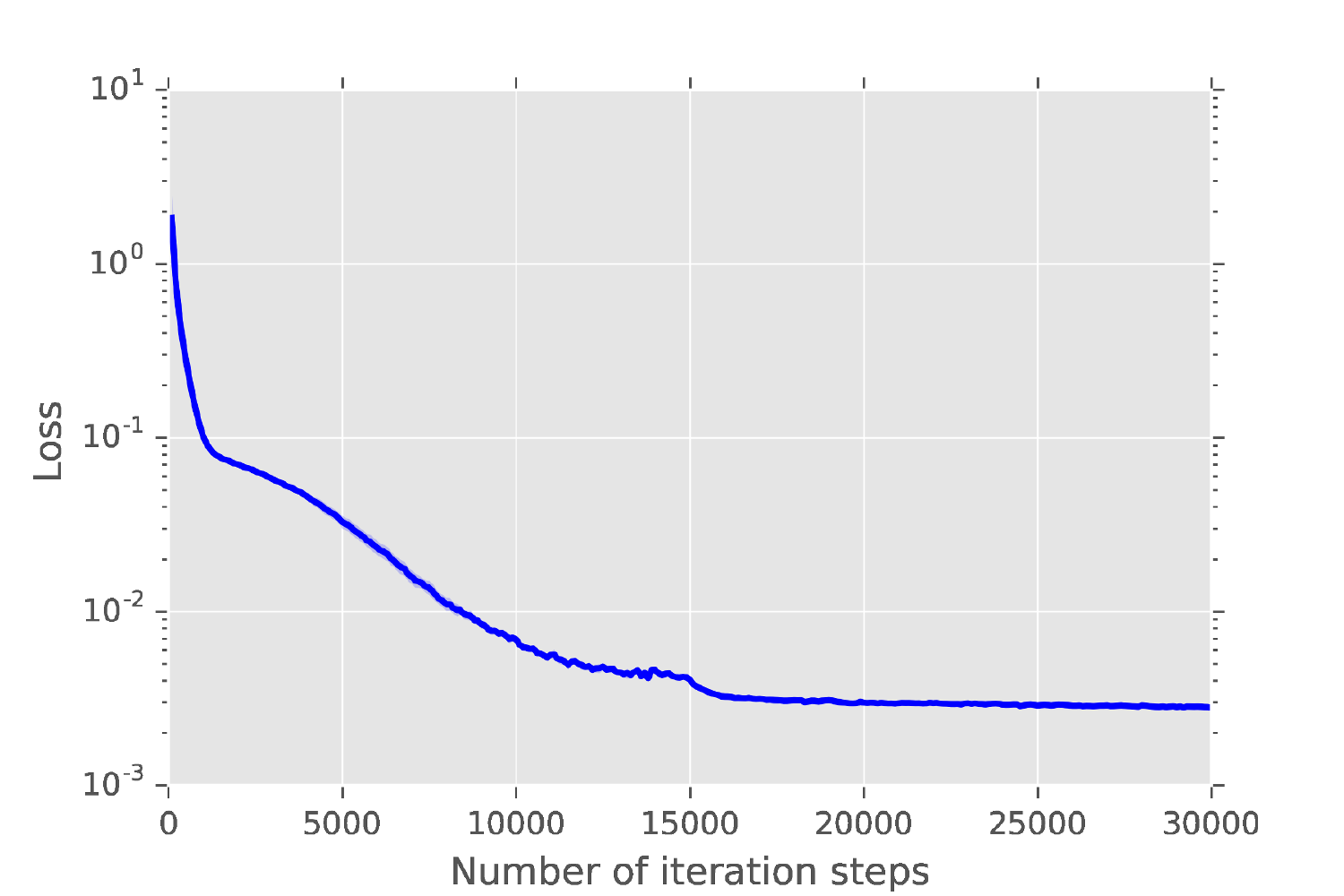}}
\caption{Relative $ L^1 $-approximation error 
of $ \mathcal{U}^{ \Theta_m } $
and mean of the loss function against $ m \in \{ 1, 2, 3, \dots, 30 \, 000 \} $
in the case of the PDE~\eqref{eq:PDE_JF} with \eqref{eq:parameter_JFb}.
The deep BSDE approximation $ \mathcal{U}^{ \Theta_{ 30 \, 000 } } \approx u(0,\xi) $ 
achieves a relative $ L^1 $-approximation error 
of size $ 0.0035 $ in a runtime of $ 4281 $ seconds.
\label{fig:JFb}}
\end{figure}

\begin{lemma}[Cf.\ Example~4.6 in Subsection~4.2 in \cite{MR3284573}]
\label{lem:JF}
Let 
$ \alpha, \kappa, T \in (0,\infty) $,
$ d \in \N $, 
let 
$ u \colon [0,T] \times \R^d \to \R $ 
be the function which satisfies 
for all 
$ t \in [0,T] $, 
$ x = (x_1, \ldots, x_d ) \in \R^d $ 
that
\begin{equation}
  u(t,x)
  =
  1 - 
  \frac{ 1 }{ 
    ( 1 + \exp( t + \kappa \sum_{ i = 1 }^d x_i ) ) 
  }
  =
  \frac{ 
    \exp( t + \kappa \sum_{ i = 1 }^d x_i )  
  }{ 
    ( 1 + \exp( t + \kappa \sum_{ i = 1 }^d x_i ) ) 
  }
  ,
\end{equation}
and let 
$ 
  f \colon [0,T] \times \R^d \times \R^{ 1 + d } \to \R
$
be the function 
which satisfies for all $ t \in [0,T] $, $ x \in \R^d $,
$ y \in \R $, $ z = ( z_1, \dots, z_d) \in \R^d $
that
\begin{equation}
  f(t,x,y,z) 
  = 
  \left(
    \alpha \kappa y 
    - 
    \frac{ 1 }{ d \kappa } 
    -
    \frac{ \alpha \kappa }{ 2 } 
  \right)
  \left(
    \smallsum\limits_{ i = 1 }^d z_i
  \right)
  .
\end{equation}
Then it holds for all 
$ t \in [0,T] $, $ x \in \R^d $ 
that
\begin{equation}
  \frac{ \partial u}{ \partial t } ( t, x )
  +
  \frac{ \alpha }{ 2 } 
  ( \Delta_x u )(t,x)
  +
  f\big( 
    t, x, u(t,x), ( \nabla_x u )(t,x) 
  \big) 
  = 0 .
\end{equation}
\end{lemma}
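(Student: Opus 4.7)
The plan is to compute all the partial derivatives of $u$ explicitly via the one-variable sigmoid function and then substitute into the PDE, watching the various cubic-in-$u$ terms cancel.

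First I would introduce the shorthand $\phi(s) = e^s/(1+e^s)$ and write $u(t,x) = \phi(s)$ with $s = t + \kappa\sum_{i=1}^d x_i$. The key identity is $\phi'(s) = \phi(s)(1-\phi(s))$, which immediately gives $\phi''(s) = \phi(s)(1-\phi(s))(1 - 2\phi(s))$. From the chain rule I would then read off
\begin{equation}
\frac{\partial u}{\partial t}(t,x) = u(1-u), \qquad \frac{\partial u}{\partial x_i}(t,x) = \kappa\, u(1-u), \qquad \frac{\partial^2 u}{\partial x_i^2}(t,x) = \kappa^2 u(1-u)(1-2u),
\end{equation}
so that $\sum_{i=1}^d \frac{\partial u}{\partial x_i}(t,x) = d\kappa\, u(1-u)$ and $(\Delta_x u)(t,x) = d\kappa^2 u(1-u)(1-2u)$.

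Next I would substitute these expressions into the definition of $f$ to obtain
\begin{equation}
f\bigl(t,x,u(t,x),(\nabla_x u)(t,x)\bigr) = \left(\alpha\kappa\, u - \tfrac{1}{d\kappa} - \tfrac{\alpha\kappa}{2}\right) \cdot d\kappa\, u(1-u) = \alpha d\kappa^2 u^2(1-u) - u(1-u) - \tfrac{\alpha d \kappa^2}{2} u(1-u),
\end{equation}
and add the three contributions to the left-hand side of the PDE. The $u(1-u)$ term from $\partial_t u$ cancels against the $-u(1-u)$ term coming from $f$, while the remaining terms combine to
\begin{equation}
\tfrac{\alpha d\kappa^2}{2} u(1-u)(1-2u) - \tfrac{\alpha d\kappa^2}{2} u(1-u) + \alpha d\kappa^2 u^2(1-u) = -\alpha d\kappa^2 u^2(1-u) + \alpha d\kappa^2 u^2(1-u) = 0.
\end{equation}

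There is no real obstacle here; the whole lemma is a bookkeeping exercise using the sigmoid identity $\phi'=\phi(1-\phi)$. The only point requiring a modicum of care is to write $f$ in the form that matches $(\nabla_x u)(t,x)$ (as opposed to the transposed/$\sigma$-multiplied version appearing in~\eqref{eq:PDE_numerics}), and to keep track of the factor $d$ that arises because $s$ depends linearly on every coordinate $x_i$, so that both $\sum_i \partial_{x_i} u$ and $\Delta_x u$ pick up a factor $d$. Once that accounting is done, the identity is immediate.
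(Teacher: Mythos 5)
Your proof is correct and follows essentially the same route as the paper's: direct computation of $\partial_t u$, $\nabla_x u$, and $\Delta_x u$ followed by substitution and cancellation. The only cosmetic difference is that you organize the algebra through the sigmoid identity $\phi'=\phi(1-\phi)$ (so everything is a polynomial in $u$), whereas the paper works with $w=\exp(t+\kappa\sum_i x_i)$ and rational expressions $w/(1+w)^2$ etc.; these are the same quantities, and your bookkeeping is if anything slightly cleaner.
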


\begin{proof}[Proof 
of Lemma~\ref{lem:JF}]
Throughout this proof 
let 
$ \beta, \gamma \in (0,\infty) $
be the real numbers given by
\begin{equation}
  \beta = \alpha \kappa
  \qquad 
  \text{and}
  \qquad 
  \gamma = \frac{ 1 }{ d \kappa } + \frac{ \alpha \kappa }{ 2 }
\end{equation}
and let 
$ w \colon [0,T] \times \R^d \to (0,\infty) $
be the function 
which satisfies for all 
$ t \in [0,T] $, 
$ x = ( x_1 , \dots, x_d ) \in \R^d $
that
\begin{equation}
  w(t,x)
  =
  \exp\!\left(
    t
    +
    \kappa
    \sum_{ i = 1 }^d
    x_i
  \right)
  .
\end{equation}
Observe that 
for all $ t \in [0,T] $,
$ x = ( x_1, \dots, x_d ) \in \R^d $,
$ i \in \{ 1, 2, \dots, d \} $
it holds that
\begin{equation}
\label{eq:def_u}
  u(t,x)
  =
  1 
  -
  \left[ 1 + w(t,x) \right]^{ - 1 }  
  =
  \frac{
    \left[ 1 + w(t,x) \right]
  }{
    \left[ 1 + w(t,x) \right]
  }
  -
  \frac{ 1 }{
    \left[ 1 + w(t,x) \right]
  }
  =
  \frac{ w(t,x) }{ 1 + w(t,x) }
  ,
\end{equation}
\begin{equation}
\label{eq:u_time_derivative}
   \frac{ \partial u}{ \partial t } ( t, x )
  =
  \left[ 1 + w(t,x) \right]^{ - 2 }  
  \cdot 
   \frac{ \partial w}{ \partial t } ( t, x )
  =
  \frac{
    w(t,x)
  }{
    \left[ 1 + w(t,x) \right]^2
  }
  ,
\end{equation}
and
\begin{equation}
\label{eq:u_xi}
   \frac{ \partial u}{ \partial x_i } ( t, x )
  =
  \left[ 1 + w(t,x) \right]^{ - 2 }  
  \cdot 
  \frac{ \partial w}{ \partial x_i } ( t, x )
  =
  \kappa 
  \,
  w(t,x)
  \left[ 1 + w(t,x) \right]^{ - 2 } 
  .
\end{equation}
Note that 
\eqref{eq:def_u},
\eqref{eq:u_time_derivative},
and \eqref{eq:u_xi}
ensure that 
for all $ t \in [0,T] $, $ x \in \R^d $
it holds that
\begin{equation}
\label{eq:u_whole_equation}
\begin{split}
&
  \frac{ \partial u}{ \partial t } (t,x)
  +
  \frac{ \alpha }{ 2 } 
  ( \Delta_x u )(t,x)
  +
  f\big(
    t, x, u(t,x), ( \nabla_x u )(t,x) 
  \big) 
\\ &
  =
  \frac{ \partial u}{ \partial t } (t,x)
  +
  \frac{ \alpha }{ 2 } 
  ( \Delta_x u )(t,x)
  +
  \left(
    \beta u(t,x)
    -
    \gamma
  \right)
  \left(
    \smallsum\limits_{ i = 1 }^d
    \displaystyle
    \frac{ \partial u}{ \partial x_i } (t,x)
  \right)
\\ & 
  =
   \frac{ \partial u}{ \partial t } (t,x)
  +
  \frac{ \alpha }{ 2 } 
  ( \Delta_x u )(t,x)
  +
  d
  \,
   \frac{ \partial u}{ \partial x_1 } (t,x)
  \big(
    \beta u(t,x)
    -
    \gamma
  \big)
\\ & 
  =
  \frac{
    w(t,x)
  }{
    \left[ 1 + w(t,x) \right]^2
  }
  +
  \frac{ \alpha }{ 2 } 
  ( \Delta_x u )(t,x)
  +
  \frac{
    d \kappa w(t,x)
  }{
    \left[ 1 + w(t,x) \right]^2
  }
    \left(
      \frac{ \beta w(t,x) }{
        \left[ 1 + w(t,x) \right]
      }
      -
      \gamma
    \right)
  .
\end{split}
\end{equation}
Moreover, observe that 
\eqref{eq:u_xi}
demonstrates that
for all $ t \in [0,T] $,
$ x \in \R^d $
it holds that
\begin{equation}
\label{eq:uxi2}
\begin{split}
   \frac{ \partial^2 u}{ \partial x_i^2 } ( t, x )
& =
  \kappa
  \,
   \frac{ \partial w}{ \partial x_i } ( t, x )
  \left[ 1 + w(t,x) \right]^{ - 2 }  
  -
  2 \kappa w(t,x)
  \left[ 1 + w(t,x) \right]^{ - 3 }  
   \frac{ \partial w}{ \partial x_i } ( t, x )
\\ &
  =
  \frac{ \kappa^2 w(t,x) }{
    \left[ 1 + w(t,x) \right]^2
  }
  -
  \frac{ 2 \kappa^2 | w(t,x) |^2 }{ 
    \left[ 1 + w(t,x) \right]^3
  }
  =
  \frac{
    \kappa^2 w(t,x)
  }{
    \left[ 1 + w(t,x) \right]^2
  }
  \left[ 
    1
    -
    \frac{ 2 w(t,x) }{ 
      \left[ 
        1 + w(t,x)
      \right]
    }
  \right]
  .
\end{split}
\end{equation}
Hence, we obtain that for all 
$ t \in [0,T] $, 
$ x \in \R^d $
it holds that
\begin{equation}
\label{eq:delta_u_x}
  \frac{ \alpha }{ 2 }
  ( 
    \Delta_x u
  )( t, x )
  =
  \frac{ d \alpha }{ 2 }
  \frac{ \partial^2 u }{ \partial x_1^2 }( t, x )
  =
  \frac{
    d \alpha \kappa^2 w(t,x)
  }{
    2 \left[ 1 + w(t,x) \right]^2
  }
  \left[ 
    1
    -
    \frac{ 2 w(t,x) }{ 
      \left[ 
        1 + w(t,x)
      \right]
    }
  \right]
  .
\end{equation}
Combining this with \eqref{eq:u_whole_equation} implies that
for all $ t \in [0,T] $, $ x \in \R^d $
it holds that
\begin{equation}
\begin{split}
&
  \frac{ \partial u}{ \partial t } (t,x)
  +
  \frac{ \alpha }{ 2 } 
  ( \Delta_x u )(t,x)
  +
  f\big( 
    t, x, u(t,x), ( \nabla_x u )(t,x) 
  \big) 
\\ &
  =
  \frac{
    w(t,x)
    \,
    [ 
      1 
      -
      d \kappa \gamma
    ]
  }{
    \left[ 1 + w(t,x) \right]^2
  }
  +
  \frac{ \alpha }{ 2 } 
  ( \Delta_x u )(t,x)
  +
  \frac{
    d \beta \kappa | w(t,x) |^2
  }{
    \left[ 1 + w(t,x) \right]^3
  }
\\ &
  =
  \frac{
    w(t,x)
    \,
    [ 
      1 
      -
      d \kappa \gamma 
    ]
  }{
    \left[ 1 + w(t,x) \right]^2
  }
  +
  \frac{
    d \alpha \kappa^2 w(t,x)
  }{
    2 \left[ 1 + w(t,x) \right]^2
  }
  \left[ 
    1
    -
    \frac{ 2 w(t,x) }{ 
      \left[ 
        1 + w(t,x)
      \right]
    }
  \right]
  +
  \frac{
    d \beta \kappa | w(t,x) |^2
  }{
    \left[ 1 + w(t,x) \right]^3
  }
\\ &
  =
  \frac{
    w(t,x)
    \big[ 
      1 
      -
      d \kappa \gamma 
      +
      \frac{ d \alpha \kappa^2 }{ 2 }
    \big]
  }{
    \left[ 1 + w(t,x) \right]^2
  }
  -
  \frac{
    d \alpha \kappa^2 | w(t,x) |^2
  }{
    \left[ 1 + w(t,x) \right]^3
  }
  +
  \frac{
    d \beta \kappa | w(t,x) |^2
  }{
    \left[ 1 + w(t,x) \right]^3
  }
  .
\end{split}
\end{equation}
The fact that 
$
  \alpha \kappa^2 = \beta \kappa
$
hence demonstrates that
for all $ t \in [0,T] $, $ x \in \R^d $
it holds that
\begin{equation}
\begin{split}
&
   \frac{ \partial u}{ \partial t } (t,x)
  +
  \frac{ \alpha }{ 2 } 
  ( \Delta_x u )(t,x)
  +
  f\big( 
    t, x, u(t,x), ( \nabla_x u )(t,x) 
  \big) 
\\ &
  =
  \frac{
    w(t,x)
    \big[ 
      1 
      -
      d \kappa \gamma 
      +
      \frac{ d \alpha \kappa^2 }{ 2 }
    \big]
  }{
    \left[ 1 + w(t,x) \right]^2
  }
  .
\end{split}
\end{equation}
This and the fact that
$
  1 
  +
  \frac{ d \alpha \kappa^2 }{ 2 }
  =
  d \kappa \gamma
$
show that
for all $ t \in [0,T] $, $ x \in \R^d $
it holds that
\begin{equation}
\begin{split}
&
   \frac{ \partial u}{ \partial t } (t,x)
  +
  \frac{ \alpha }{ 2 } 
  ( \Delta_x u )(t,x)
  +
  f\big( 
    t, x, u(t,x), ( \nabla_x u )(t,x) 
  \big) 
=
  0
  .
\end{split}
\end{equation}
The proof of Lemma~\ref{lem:JF}
is thus completed.
\end{proof}

\subsection{An example PDE with quadratically growing derivatives and an explicit solution}
\label{sec:GobetTurke1}

In this subsection we consider a high-dimensional version 
of the example analyzed numerically in 
Gobet \& Turkedjiev~\cite[Section~5]{GobetTurkedjiev2016MathComp}.
More specifically, Gobet \& Turkedjiev~\cite[Section~5]{GobetTurkedjiev2016MathComp} 
employ the PDE in \eqref{eq:PDE_GobetTurke1} below 
as a numerical test example but with 
the time horizont $ T = \nicefrac{ 2 }{ 10 } $
instead of $ T = 1 $ in this article and 
with the dimension $ d \in \{ 3, 5, 7 \} $
instead of $ d = 100 $ in this article.

Assume the setting in Subsection~\ref{sec:example_setting}, 
let 
$ 
  \alpha = \nicefrac{ 4 }{ 10 } 
$,
let 
$ \psi \colon [0,T] \times \R^d \to \R $ 
be the function which satisfies 
for all $ (t,x) \in [0,T] \times \R^d $
that
$
  \psi(t,x)
  =
  \sin\!\left(
    [ T - t + \| x \|^2_{ \R^d } ]^{ \alpha }
  \right)
$,
and assume 
for all
$
  s \in [0,T]
$, 
$
  t \in [0,T) 
$,
$
  x, w
  \in \R^d
$, 
$
  y \in \R
$, 
$
  z \in \R^{ 1 \times d } 
$,
$ m \in \N $
that 
$ T = 1 $,
$ d = 100 $,
$ N = 30 $,
$
  \gamma_m 
  = 
  5 \cdot 10^{ -3 }
  =
  \frac{ 5 }{ 1000 }
  = 0.005
$,
$
  \mu( t, x ) = 0
$,
$
  \sigma( t, x ) w = w
$,
$
  \xi = ( 0, 0, \dots, 0 ) \in \R^d
$,
$
  \Upsilon( t, s, x, w )
  =
  x + w
$,
$
  g(x) 
  =   
  \sin(
    \| x \|_{ \R^d }^{ 2 \alpha }
  )
$,
and
\begin{equation}
  f(t,x,y,z) 
  = 
  \| z \|^2_{ \R^{ 1 \times d } } 
  - 
  \| ( \nabla_x \psi )( t, x ) \|^2_{ \R^d }
  -
   \frac{ \partial \psi}{ \partial t } (t,x)
  -
  \frac{ 1 }{ 2 } \, 
  ( \Delta_x \psi )(t,x)
  .
\end{equation}
Note that the solution $ u $ of the 
PDE~\eqref{eq:PDE_numerics}
then satisfies for all 
$ 
  t \in [0,T) 
$, 
$ 
  x = (x_1, x_2, \ldots, x_d) \in \R^d 
$
that
$
  u(T,x) = g(x) 
$
and
\begin{equation}  
\label{eq:PDE_GobetTurke1}
   \frac{ \partial u}{ \partial t } ( t, x )
  + 
  \| ( \nabla_x u )( t, x ) \|^2_{ \R^d } 
  +
  \frac{ 1 }{ 2 } \, 
  ( \Delta_x u )(t,x)
  = 
   \frac{ \partial \psi}{ \partial t } (t,x)
  + 
  \| ( \nabla_x \psi )( t, x ) \|^2_{ \R^d }
  +
  \frac{ 1 }{ 2 } \, 
  ( \Delta_x \psi )(t,x)
  .
\end{equation}
On the left hand side of Figure~\ref{fig:GobetTurke1}
we present approximatively 
the relative $ L^1 $-approximatin error
associated to 
$ \mathcal{U}^{ \Theta_m } $
against 
$ m \in \{ 1, 2, 3, \dots, 4000 \} $
based on $ 5 $ independent realizations
($ 5 $ independent runs).
On the right hand side of Figure~\ref{fig:GobetTurke1}
we present approximatively 
the mean of the loss function 
associated to 
$ \Theta_m $ 
against 
$ m \in \{ 1, 2, 3, \dots, 4000 \} $
based on $ 256 $ Monte Carlo samples 
and $ 5 $ independent realizations ($ 5 $ independent runs).
\begin{figure}[ht]
\centering
\setcounter{subfigure}{0}
\subfigure[Relative $ L^1 $-approximation error]{\includegraphics[width=8cm]{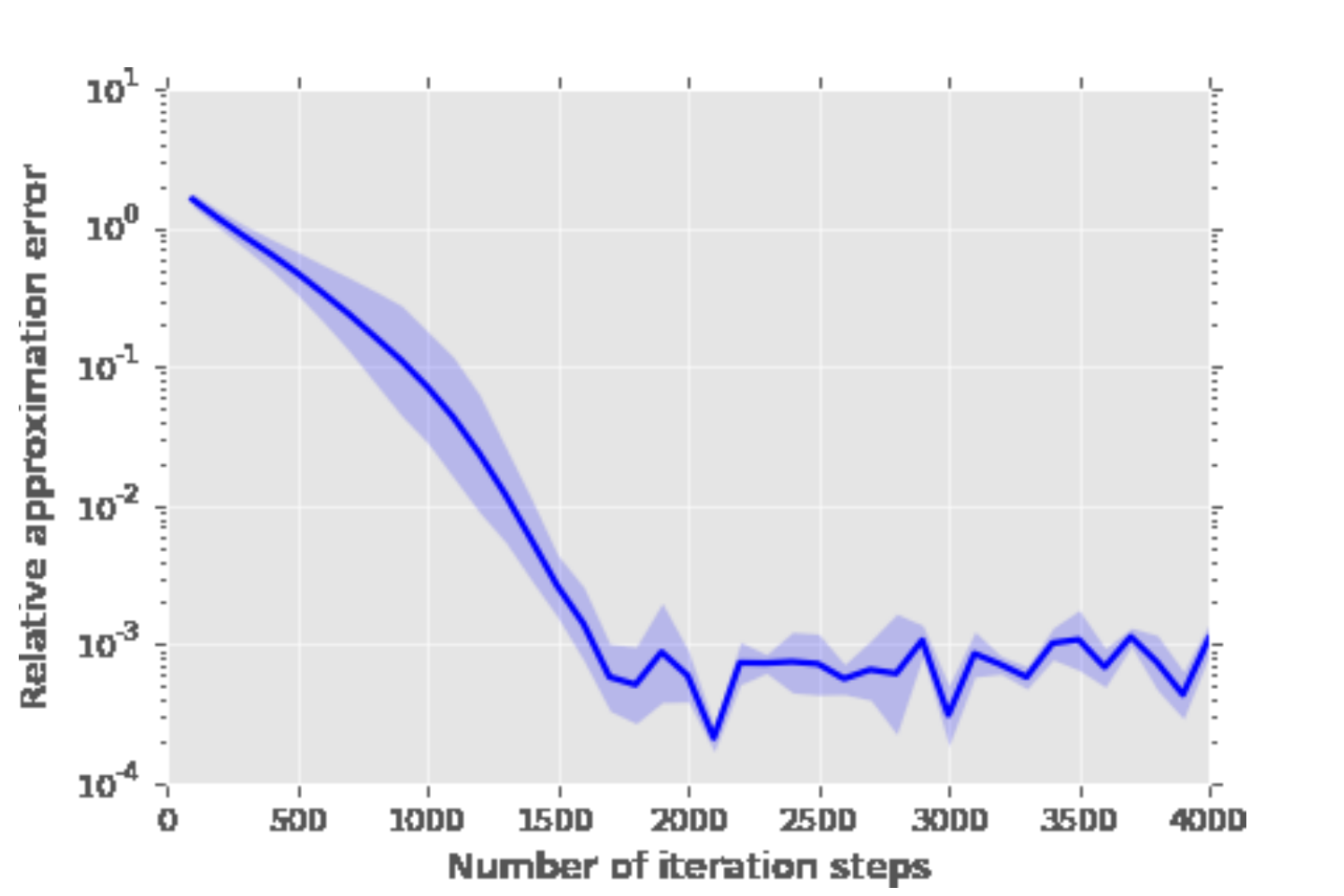}}
\subfigure[Mean of the loss function]{\includegraphics[width=8cm]{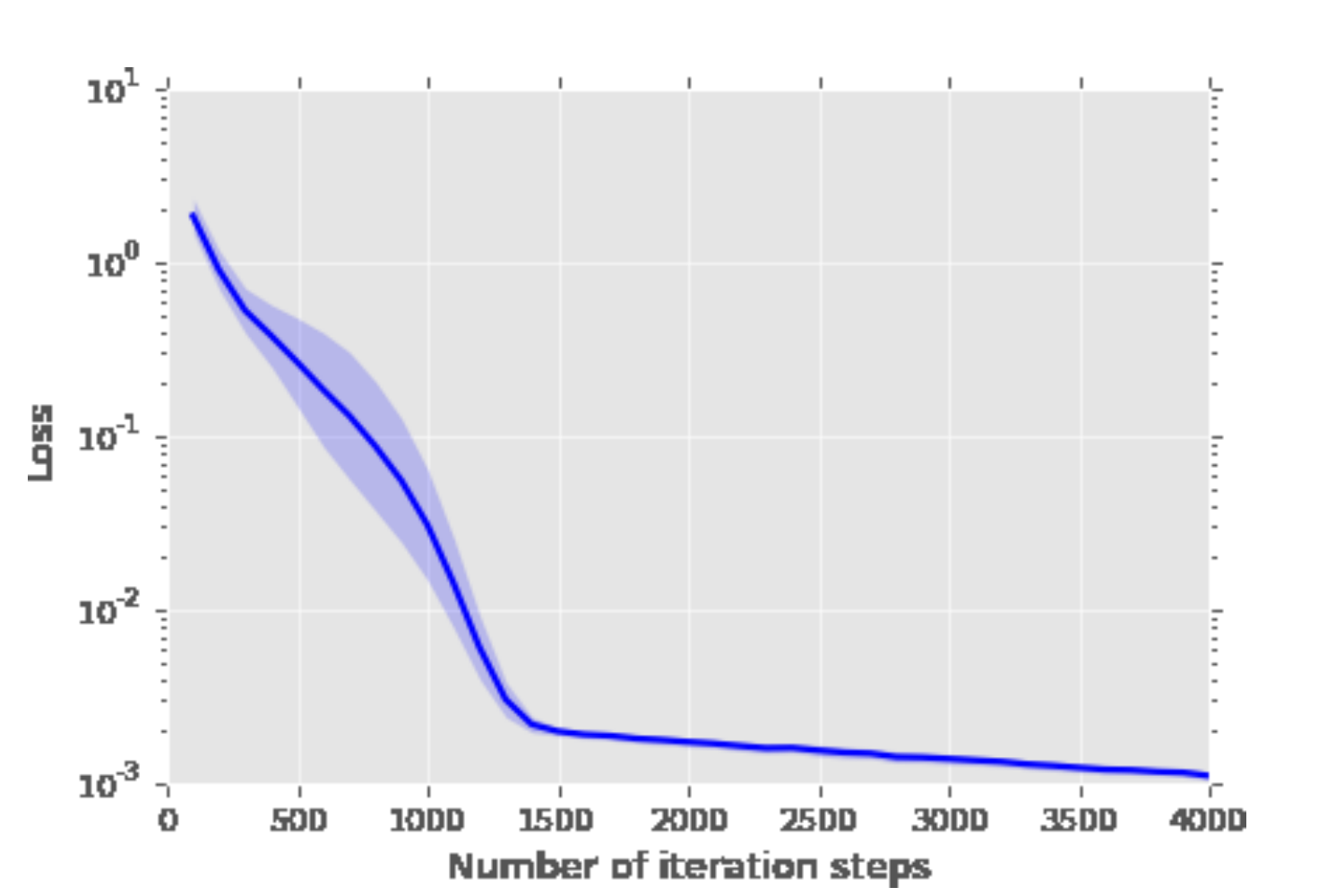}}
\caption{Relative $ L^1 $-approximation error 
of $ \mathcal{U}^{ \Theta_m } $
and mean of the loss function against $ m \in \{ 1, 2, 3, \dots, 4000 \} $
in the case of the PDE~\eqref{eq:PDE_GobetTurke1}.
The deep BSDE approximation $ \mathcal{U}^{ \Theta_{ 4 000 } } \approx u(0,\xi) $ 
achieves a relative $ L^1 $-approximation error 
of size $ 0.0009 $ in a runtime of $ 957 $ seconds.
\label{fig:GobetTurke1}}
\end{figure}

\subsection{Time-dependent reaction-diffusion-type example PDEs with oscillating explicit solutions}
\label{sec:GobetTurke2}

In this subsection we consider a high-dimensional version 
of the example PDE analyzed numerically 
in Gobet \& Turkedjiev~\cite[Subsection~6.1]{GobetTurkedjiev2016SPA}.
More specifically,
Gobet \& Turkedjiev~\cite[Subsection~6.1]{GobetTurkedjiev2016SPA} 
employ the PDE in \eqref{eq:PDE_GobetTurke2} below 
as a numerical test example but in two space-dimensions ($ d = 2 $)
instead of in hundred space-dimensions ($ d = 100 $) as in this article.

Assume the setting in Subsection~\ref{sec:example_setting}, 
let 
$ 
  \kappa = \nicefrac{ 6 }{ 10 } 
$, 
$ 
  \lambda = \nicefrac{ 1 }{ \sqrt{d} }
$,
assume 
for all
$
  s, t \in [0,T]
$, 
$
  x = ( x_1, \dots, x_d ),
  w = ( w_1, \dots, w_d ) 
  \in \R^d
$, 
$
  y \in \R
$, 
$
  z \in \R^{ 1 \times d } 
$,
$ m \in \N $
that 
$
  \gamma_m = \frac{ 1 }{ 100 } = 0.01
$,
$ T = 1 $,
$ d = 100 $,
$ N = 30 $,
$
  \mu( t, x ) = 0
$,
$
  \sigma( t, x ) w = w
$,
$
  \xi = ( 0, 0, \dots, 0 ) \in \R^d
$,
$
  \Upsilon( s, t, x, w )
  =
  x + w
$,
$
  g(x) = 1 + \kappa + \sin( \lambda \sum_{ i = 1 }^d x_i )
$,
and
\begin{equation}
\label{eq:def_f_GobetTurke2}
  f(t, x, y, z) 
  = 
  \min\!\Big\{ 
    1 ,
    \big[
      y 
      - 
      \kappa 
      - 
      1
      - 
      \sin\!\big( 
        \textstyle
        \lambda \sum_{ i = 1 }^d x_i 
      \big) 
      \exp\!\big( 
        \tfrac{ 
          \lambda^2 d ( t - T ) 
        }{ 2 } 
      \big)
    \big]^2
  \Big\}
  .
\end{equation}
Note that the solution $ u $ of the 
PDE~\eqref{eq:PDE_numerics}
then satisfies for all 
$ 
  t \in [0,T) 
$, 
$ 
  x = (x_1, x_2, \ldots, x_d) \in \R^d 
$
that
$
  u(T,x) = g(x) 
$
and
\begin{equation}  
\label{eq:PDE_GobetTurke2}
  \frac{ \partial u}{ \partial t } ( t, x )
  + 
  \min\!\Big\{ 
    1 ,
    \big[
      u(t,x)
      - 
      \kappa 
      - 
      1
      - 
      \sin\!\big( 
        \textstyle
        \lambda \sum_{ i = 1 }^d x_i 
      \big) 
      \exp\!\big( 
        \frac{ 
          \lambda^2 d ( t - T ) 
        }{ 2 } 
      \big)
    \big]^2
  \Big\}
  +
  \frac{ 1 }{ 2 } \, 
  ( \Delta_x u )(t,x)
  = 0
\end{equation}
(cf.\ Lemma~\ref{lem:GobetTurke2} below).
On the left hand side of Figure~\ref{fig:GobetTurke1}
we present approximatively 
the relative $ L^1 $-approximatin error
associated to 
$ \mathcal{U}^{ \Theta_m } $
against 
$ m \in \{ 1, 2, 3, \dots, 24 000 \} $
based on $ 5 $ independent realizations
($ 5 $ independent runs).
On the right hand side of Figure~\ref{fig:GobetTurke1}
we present approximatively 
the mean of the loss function 
associated to 
$ \Theta_m $ 
against 
$ m \in \{ 1, 2, 3, \dots, 24 000 \} $
based on $ 256 $ Monte Carlo samples 
and $ 5 $ independent realizations ($ 5 $ independent runs).
\begin{figure}[ht]
\centering
\setcounter{subfigure}{0}
\subfigure[Relative $ L^1 $-approximation error]{\includegraphics[width=8cm]{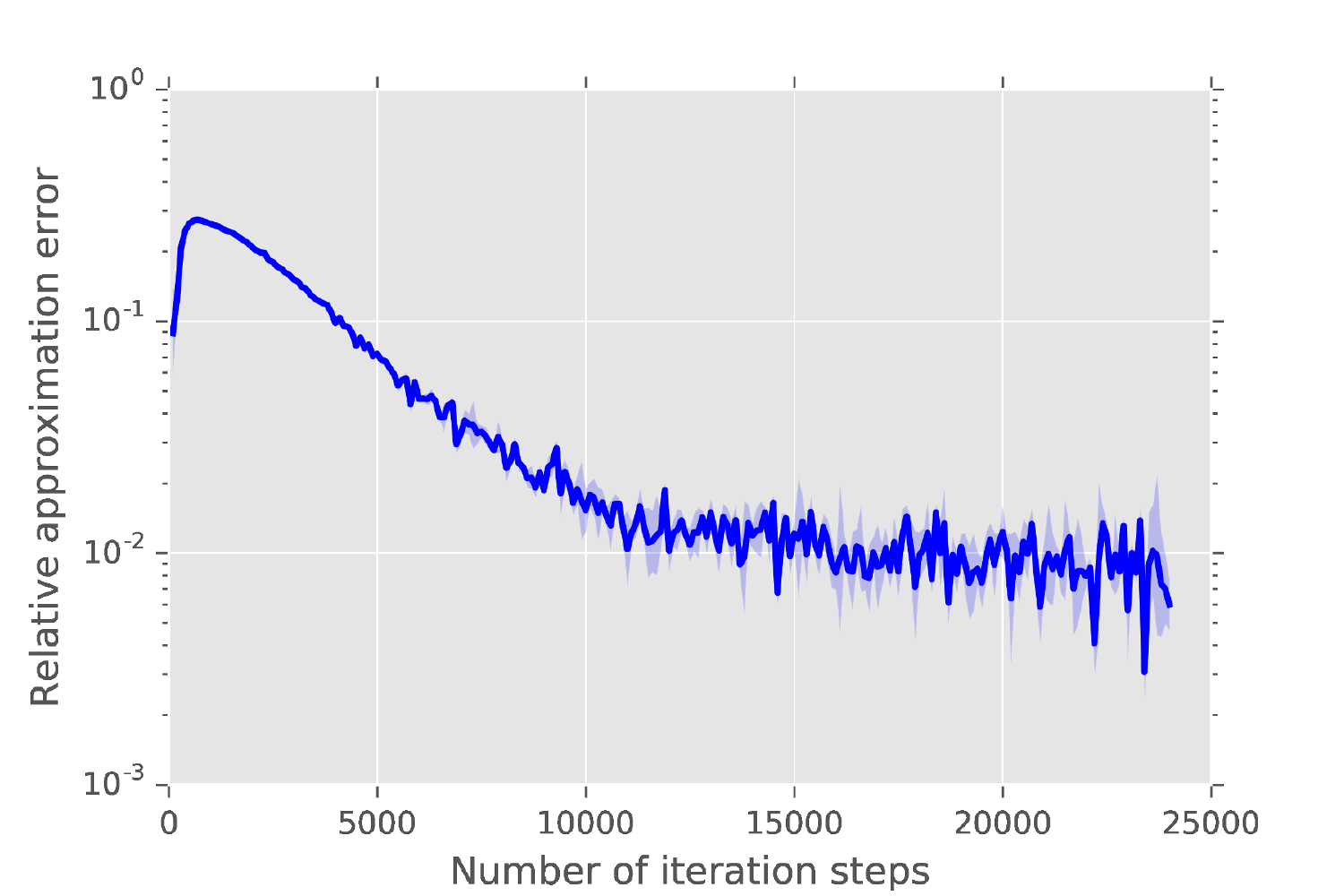}}
\subfigure[Mean of the loss function]{\includegraphics[width=8cm]{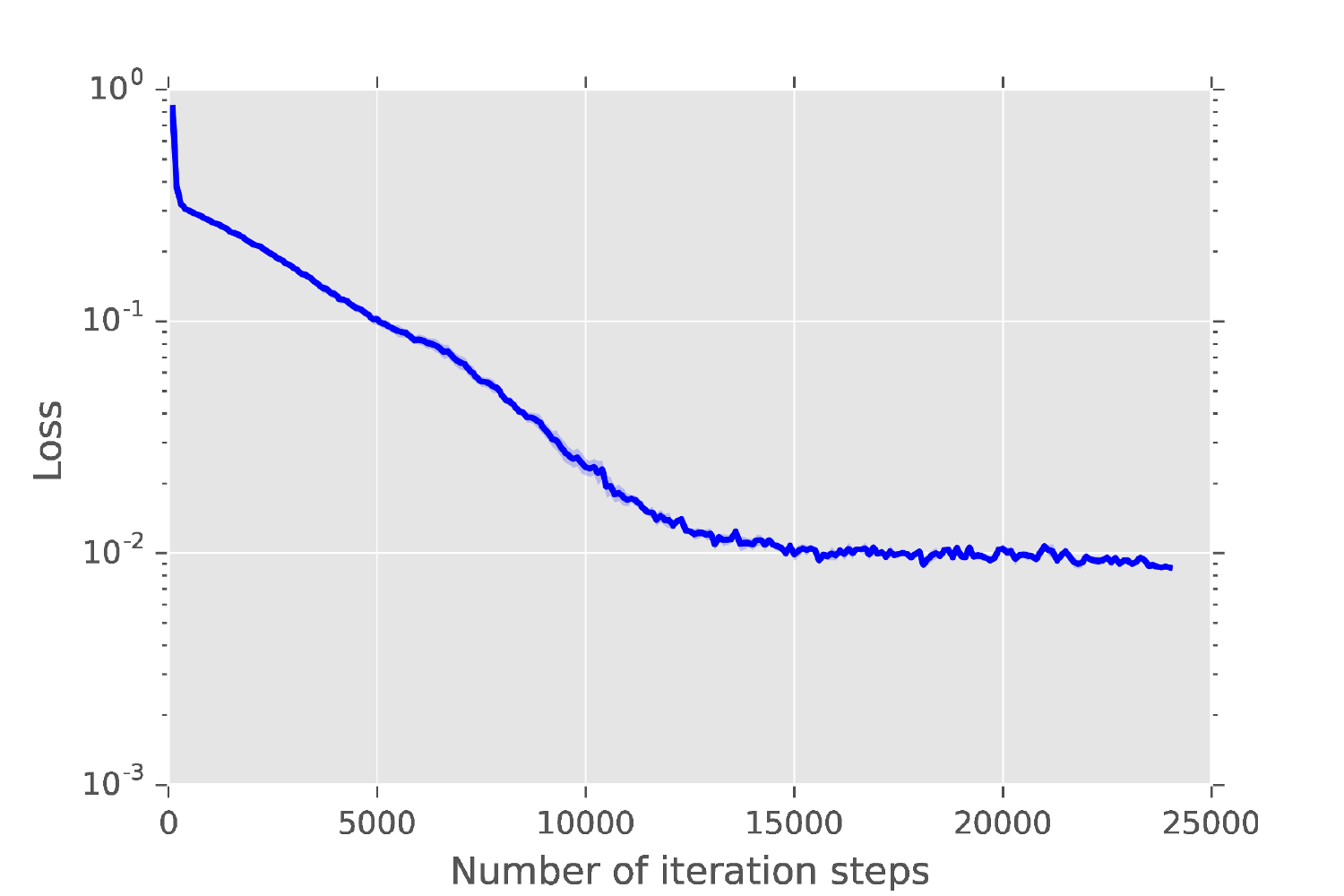}}
\caption{Relative $ L^1 $-approximation error 
of $ \mathcal{U}^{ \Theta_m } $
and mean of the loss function against $ m \in \{ 1, 2, 3, \dots, 24 000 \} $
in the case of the PDE~\eqref{eq:PDE_GobetTurke2}.
The deep BSDE approximation $ \mathcal{U}^{ \Theta_{ 24 000 } } \approx u(0,\xi) $ achieves a relative $ L^1 $-approximation error 
of size $ 0.0060 $ in a runtime of $ 4872 $ seconds.
\label{fig:GobetTurke2}}
\end{figure}
\begin{lemma}[Cf.\ Subsection~6.1 in \cite{GobetTurkedjiev2016SPA}]
\label{lem:GobetTurke2}
Let
$ T, \kappa, \lambda \in (0,\infty) $,
$ d \in \N $
and let 
$ u \colon [0,T] \times \R^d \to \R $
be the function which satisfies
for all $ t \in [0,T] $, $ x = ( x_1, \dots, x_d ) \in \R^d $
that
\begin{equation}
  u(t,x) 
  = 
  1
  +
  \kappa 
  +
  \sin\!\big( 
    \textstyle
    \lambda \sum_{ i = 1 }^d x_i 
  \big) 
  \exp\!\big( 
    \tfrac{ 
      \lambda^2 d ( t - T ) 
    }{ 2 } 
  \big)
  .
\end{equation}
Then it holds 
for all 
$ t \in [0,T] $, $ x = ( x_1, \dots, x_d ) \in \R^d $
that
$
  u \in C^{ 1, 2 }( [0,T] \times \R^d, \R )
$,
$
  u(T,x) = 
  1 + \kappa + \sin( \lambda \sum_{ i = 1 }^d x_i )
$,
and
\begin{equation}  
  \frac{ \partial u}{ \partial t } ( t, x )
  + 
  \min\!\Big\{ 
    1 ,
    \big[
      u(t,x)
      - 
      \kappa 
      - 
      1
      - 
      \sin\!\big( 
        \textstyle
        \lambda \sum_{ i = 1 }^d x_i 
      \big) 
      \exp\!\big( 
        \frac{ 
          \lambda^2 d ( t - T ) 
        }{ 2 } 
      \big)
    \big]^2
  \Big\}
  +
  \frac{ 1 }{ 2 } \, 
  ( \Delta_x u )(t,x)
  = 0
  .
\end{equation}
\end{lemma}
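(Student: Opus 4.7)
The statement is essentially a verification, so my plan is to reduce the claimed nonlinear PDE to a linear heat equation by a cancellation that is built into the definition of $u$, and then to check that linear equation by direct differentiation.

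First I would observe that the argument of the $\min$ term contains, by construction, the quantity
\begin{equation*}
  u(t,x) - \kappa - 1 - \sin\!\big(\textstyle\lambda \sum_{i=1}^d x_i\big)\exp\!\big(\tfrac{\lambda^2 d(t-T)}{2}\big),
\end{equation*}
which vanishes identically in $(t,x) \in [0,T] \times \R^d$ by the very definition of $u$. Hence
$\min\{1, [\,\cdot\,]^2\} = \min\{1, 0\} = 0$, and the claimed PDE collapses to the linear backward heat equation $\frac{\partial u}{\partial t} + \frac{1}{2}(\Delta_x u) = 0$.

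Next I would verify this heat equation by a direct calculation. Writing $S(x) = \sin(\lambda \sum_{i=1}^d x_i)$ and $E(t) = \exp(\lambda^2 d(t-T)/2)$, the constant $1+\kappa$ is annihilated by both $\partial_t$ and $\Delta_x$, so it suffices to compute for $v = S \cdot E$. One finds $\partial_t v = \frac{\lambda^2 d}{2}\, S(x) E(t)$, while $\partial_{x_i} v = \lambda \cos(\lambda \sum_j x_j) E(t)$ and $\partial_{x_i}^2 v = -\lambda^2 S(x) E(t)$, which gives $\Delta_x v = -d\lambda^2 S(x) E(t)$ and therefore $\partial_t v + \tfrac{1}{2}\Delta_x v = 0$. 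This establishes the PDE, and smoothness $u \in C^{1,2}([0,T]\times\R^d,\R)$ as well as the terminal identity $u(T,x) = 1 + \kappa + \sin(\lambda \sum_{i=1}^d x_i)$ follow immediately from the explicit closed form (since $E(T) = 1$ and $\sin$, $\exp$ are smooth).

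There is no real obstacle here: the only subtlety is recognizing that the apparently nonlinear source term in the PDE is engineered to vanish on the candidate solution, so the problem reduces to the elementary fact that $(t,x) \mapsto \sin(\lambda \sum x_i)\exp(\lambda^2 d(t-T)/2)$ solves the backward heat equation with diffusion coefficient $\tfrac{1}{2}$, which is a standard separation-of-variables computation.
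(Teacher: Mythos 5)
Your proposal is correct and follows essentially the same route as the paper's proof: both verify that $u$ solves the backward heat equation $\partial_t u + \tfrac{1}{2}\Delta_x u = 0$ by direct differentiation of $\sin(\lambda\sum_i x_i)\exp(\lambda^2 d(t-T)/2)$, and both use that the argument of the $\min$ term vanishes identically so that the nonlinearity contributes $\min\{1,0\}=0$. The only difference is presentational --- you make the cancellation of the nonlinear term explicit at the outset, whereas the paper records it at the end --- so no further comment is needed.
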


\begin{proof}[Proof of Lemma~\ref{lem:GobetTurke2}]
Note that for all $ t \in [0,T] $,
$ x = ( x_1, \dots, x_d ) \in \R^d $ it holds that
\begin{equation}
\label{eq:time_derivative_v}
\begin{split}
   \frac{ \partial u }{ \partial t } ( t, x )
& =
  \frac{ \lambda^2 d }{ 2 }
  \sin\!\big( 
    \textstyle
    \lambda \sum_{ i = 1 }^d x_i 
  \big) 
  \exp\!\big( 
    \frac{ 
      \lambda^2 d ( t - T ) 
    }{ 2 } 
  \big)
  .
\end{split}
\end{equation}
In addition, observe that
for all $ t \in [0,T] $, $ x = (x_1, \dots, x_d) \in \R^d $,
$ k \in \{ 1, 2, \dots, d \} $
it holds that
\begin{equation}
  \frac{ \partial u }{ \partial x_k } 
  ( t, x )
  =
  \lambda
  \cos\!\big( 
    \textstyle
    \lambda \sum_{ i = 1 }^d x_i 
  \big) 
  \exp\!\big( 
    \frac{ 
      \lambda^2 d ( t - T ) 
    }{ 2 } 
  \big)
  .
\end{equation}
Hence, we obtain that
for all $ t \in [0,T] $, $ x = (x_1, \dots, x_d) \in \R^d $,
$ k \in \{ 1, \dots, d \} $
it holds that
\begin{equation}
  \frac{ \partial^2 u }{ \partial x_k^2 } 
  ( t, x )
  =
  -
  \lambda^2
  \sin\!\big( 
    \textstyle
    \lambda \sum_{ i = 1 }^d x_i 
  \big) 
  \exp\!\big( 
    \frac{ 
      \lambda^2 d ( t - T ) 
    }{ 2 } 
  \big)
  .
\end{equation}
This ensures that
for all $ t \in [0,T] $, $ x = (x_1, \dots, x_d) \in \R^d $
it holds that
\begin{equation}
\begin{split}
  ( \Delta_x u )( t, x )
& =
  -
  \,
  d \,
  \lambda^2
  \sin\!\big( 
    \textstyle
    \lambda \sum_{ i = 1 }^d x_i 
  \big) 
  \exp\!\big( 
    \frac{ 
      \lambda^2 d ( t - T ) 
    }{ 2 } 
  \big)
  .
\end{split}
\end{equation}
Combining this with \eqref{eq:time_derivative_v}
proves that
for all $ t \in [0,T] $, $ x = (x_1, \dots, x_d) \in \R^d $
it holds that
\begin{equation}
\begin{split}
   \frac{ \partial u }{ \partial t } ( t, x )
  +
  \frac{ 1 }{ 2 } \,
  ( \Delta_x u )( t, x )
  = 0
  .
\end{split}
\end{equation}
This demonstrates that
for all $ t \in [0,T] $, $ x = (x_1, \dots, x_d) \in \R^d $
it holds that
\begin{equation}  
\begin{split}
&
  \frac{ \partial v}{ \partial t } ( t, x )
  + 
  \min\!\Big\{ 
    1 ,
    \big[
      v(t,x)
      - 
      \kappa 
      - 
      1
      - 
      \sin\!\big( 
        \textstyle
        \lambda \sum_{ i = 1 }^d x_i 
      \big) 
      \exp\!\big( 
        \frac{ 
          \lambda^2 d ( t - T ) 
        }{ 2 } 
      \big)
    \big]^2
  \Big\}
  +
  \frac{ 1 }{ 2 } \, 
  ( \Delta_x v )(t,x)
\\ &
  =
   \frac{ \partial v}{ \partial t } ( t, x )
  +
  \frac{ 1 }{ 2 } \, 
  ( \Delta_x v )(t,x)
  = 0
  .   
\end{split}
\end{equation}
The proof of Lemma~\ref{lem:GobetTurke2} is thus completed.
\end{proof}

\section{Appendix A: Special cases of the proposed algorithm}
\label{sec:special_cases}

In this subsection we illustrate the general 
algorithm in Subsection~\ref{sec:general_case}
in several special cases.
More specifically, 
in Subsections~\ref{sec:SGD} and \ref{sec:Adam}
we provide special choices for the 
functions $ \psi_m $, $ m \in \N $,
and $ \Psi_m $, $ m \in \N $, 
employed in \eqref{eq:theta_dynamics}
and in Subsections~\ref{sec:GBM}
and \ref{sec:EM} 
we provide special choices for the 
function $ \Upsilon $
in \eqref{eq:X_dynamic}.

\subsection{Stochastic gradient descent (SGD)}
\label{sec:SGD}

\begin{example}
Assume the setting in Subsection~\ref{sec:general_case},
let 
$ ( \gamma_m )_{ m \in \N } \subseteq (0,\infty) $,
and assume 
for all 
$ m \in \N $,
$ x \in \R^{ \varrho } $,
$
  ( \varphi_j )_{ j \in \N }
  \in ( \R^{ \rho } )^{ \N }
$
that
\begin{equation}
  \varrho 
  = 
  \rho
  ,
  \qquad
  \Psi_m( x, ( \varphi_j )_{ j \in \N } )
  =
  \varphi_1
  ,
\qquad 
  \text{and}
\qquad
  \psi_m( x ) = \gamma_m x
  .
\end{equation}
Then it holds for all $ m \in \N $ that
\begin{equation}
  \Theta_{ m }
  =
  \Theta_{ m - 1 }
  -
  \gamma_{ m }
  \Phi_{ m - 1, 1 }( \Theta_{ m - 1 } )
  .
\end{equation}
\end{example}

\subsection{Adaptive Moment Estimation (Adam) with mini-batches}
\label{sec:Adam}

In this subsection we illustrate how the so-called Adam optimizer (see \cite{Kingma2015}) 
can be employed in conjunction with the deep BSDE solver 
in Subsection~\ref{sec:general_case} (cf.\ also Subsection~\ref{sec:example_setting} above).

\begin{example}
\label{ex:Adam}
Assume the setting in Subsection~\ref{sec:general_case},
assume that 
$
  \varrho   
  = 
  2 \rho
$,
let 
$
  \operatorname{Pow}_r \colon
  \R^{ \rho }
  \to 
  \R^{ \rho }
$,
$ r \in (0,\infty) $,
be the functions 
which satisfy
for all 
$ r \in (0,\infty) $,
$ 
  x = ( x_1, \dots, x_{ \rho } ) \in \R^{ \rho }
$ 
that
\begin{equation}
  \operatorname{Pow}_{ r }( x )
  = ( | x_1 |^r, \dots, | x_{ \rho } |^r )
  ,
\end{equation}
let 
$ \varepsilon \in (0,\infty) $,
$ ( \gamma_m )_{ m \in \N } \subseteq (0,\infty) $,
$ ( J_m )_{ m \in \N_0 } \subseteq \N $,
$
  \mathbb{X}, \mathbb{Y} \in (0,1)
$,
let 
$
  {\bf m}
  ,
  \mathbb{M}
  \colon
  \N_0 \times \Omega \to \R^{ \rho }
$
be the stochastic processes
which satisfy for all 
$ m \in \N_0 $
that
$
  \Xi_m = ( {\bf m}_m , \mathbb{M}_m )
$,
and assume 
for all 
$ m \in \N $,
$ x, y \in \R^{ \rho } $,
$
  ( \varphi_j )_{ j \in \N }
  \in ( \R^{ \rho } )^{ \N }
$
that
\begin{equation}
  \Psi_m( x, y, ( \varphi_j )_{ j \in \N } )
  =
  \big(
  \mathbb{X} x
  +
    ( 1 - \mathbb{X} ) 
    \big(
      \tfrac{
        1 
      }{ 
        J_m 
      }
      \smallsum_{ j = 1 }^{ J_m }
      \varphi_j
    \big)
  ,
  \mathbb{Y} y
  +
    ( 1 - \mathbb{Y} ) 
    \operatorname{Pow}_2\!\big(
      \frac{ 1 }{ J_m }
      \smallsum_{ j = 1 }^{ J_m }
      \varphi_j
    \big)
  \big)
\end{equation}
and
\begin{equation}
  \psi_m( x, y ) 
  = 
  \left[ 
    \varepsilon
    +
    \operatorname{Pow}_{ \nicefrac{ 1 }{ 2 } }\!\left(
      \frac{
        y
      }{
        ( 1 - \mathbb{Y}^m )
      }
    \right)
  \right]^{ - 1 }
  \frac{ 
    \gamma_m x 
  }{ 
    ( 1 - \mathbb{X}^m ) 
  }
  .
\end{equation}
Then it holds for all $ m \in \N $ that
\begin{equation}
\begin{split}
  \Theta_{ m }
& =
  \Theta_{ m - 1 }
  -
  \left[ 
    \varepsilon
    +
    \operatorname{Pow}_{ 
      \nicefrac{ 1 }{ 2 } 
    }\!\left(
      \frac{
        \mathbb{M}_{ m }
      }{
        ( 1 - \mathbb{Y}^{ m } )
      }
    \right)
  \right]^{ - 1 }
  \frac{
    \gamma_{ m } 
    {\bf m}_{ m }
  }{
    ( 1 - \mathbb{X}^{ m } )
  }
  ,
\\
  {\bf m}_m 
& =
  \mathbb{X}
  \,
  {\bf m}_{ m - 1 }
  +
  \frac{ 
    ( 1 - \mathbb{X} )
  }{ J_m }
  \left(
  \sum_{ j = 1 }^{ J_m }
      \Phi^{ m - 1 , j }_{
        \mathbb{S}_m
      }( 
        \Theta_{ m - 1 } 
      ) 
  \right)
  ,
\\
  \mathbb{M}_m 
&
  =
  \mathbb{Y}
  \,
  \mathbb{M}_{ m - 1 }
  +
  \left( 
    1 - \mathbb{Y} 
  \right)
  \operatorname{Pow}_{ 2 }\!\left(
    \frac{ 1 }{ J_m }
    \sum_{ j = 1 }^{ J_m }
      \Phi^{ m - 1 , j }_{
        \mathbb{S}_m
      }( 
        \Theta_{ m - 1 } 
      ) 
  \right)
  .
\end{split}
\end{equation}
\end{example}

\subsection{Geometric Brownian motion}
\label{sec:GBM}

\begin{example}
\label{ex:geometricBM}
Assume the setting in Section~\ref{sec:general_case}, 
let $ \bar{\mu}, \bar{\sigma} \in \R $,
and assume 
for all 
$ s, t \in [0,T] $,
$ 
  x = ( x_1, \dots, x_d )
$,
$
  w = ( w_1, \dots, w_d ) \in \R^d 
$
that
\begin{equation}
  \Upsilon( s, t, x, w )
  =
  \exp\!\left(
    \left( \bar{\mu} - \frac{ \bar{\sigma}^2 }{ 2 } \right) ( t - s )
  \right)
  \exp\!\left(
    \bar{\sigma} 
    \operatorname{diag}_{ \R^{ d \times d } }( w_1, \dots, w_d )
  \right)
  x
  .
\end{equation}
Then it holds 
for all 
$ m, j \in \N_0 $,
$ n \in \{ 0, 1, \dots, N \} $
that
\begin{equation}
  \mathcal{X}^{
    \theta, m, j 
  }_n
  =
  \exp\!\left(
    \left( 
      \bar{\mu} - \frac{ \bar{\sigma}^2 }{ 2 } 
    \right) 
    t_n
    \operatorname{Id}_{ \R^d }
    +
    \bar{\sigma} 
    \operatorname{diag}_{ \R^{ d \times d } }( W_{ t_n }^{ m, j } )
  \right)
  \xi
  .
\end{equation}
\end{example}

In the setting of Example~\ref{ex:geometricBM} 
we consider under suitable further hypotheses 
(cf.\ Subsection~\ref{subsec:borrowlend} above)
for every sufficiently large 
$ m \in \N_0 $
the random variable
$ \mathcal{U}^{ \Theta_m } $
as an approximation of $ u(0,\xi) $
where
$ 
  u \colon [0,T] \times \R^d \to \R^k 
$
is a suitable solution of the PDE
\begin{multline}
   \tfrac{ \partial u}{ \partial t } ( t, x )
  +
  \tfrac{\bar{\sigma}^2}{ 2 }
  \smallsum\limits_{i=1}^d
  | x_i |^2
  \, 
  \big( 
    \tfrac{ \partial^2 u}{ \partial x^2_i } 
  \big)(t,x)
  +
  \bar{\mu} \sum\limits_{i=1}^d x_i 
  \, \big(\tfrac{\partial u}{\partial x_i}\big)(t,x)
\\ 
  + 
  f\big( 
    t, x, 
    u(t,x),
    \bar{\sigma} 
    \,
    ( \tfrac{ \partial u}{ \partial x } )( t, x )
    \operatorname{diag}_{ \R^{ d \times d } }(x_1, \dots, x_d) 
  \big)
  = 0
\end{multline}
with 
$ u(T,x) = g(x) $
for $ t \in [0,T] $, $ x = ( x_1, \dots, x_d ) \in \R^d $.

\subsection{Euler-Maruyama scheme}
\label{sec:EM}

\begin{example}
\label{ex:EM}
Assume the setting in Section~\ref{sec:general_case}, 
let 
$ \mu \colon [0,T] \times \R^d \to \R^d $
and
$ \sigma \colon [0,T] \times \R^d \to \R^d $
be functions, 
and assume 
for all 
$ s, t \in [0,T] $,
$ 
  x, w \in \R^d 
$
that
\begin{equation}
  \Upsilon( s, t, x, w )
  =
  x
  +
  \mu( s, x )
  \, ( t - s )
  +
  \sigma( s, x )
  \, 
  w
  .
\end{equation}
Then it holds 
for all 
$ m, j \in \N_0 $,
$ n \in \{ 0, 1, \dots, N - 1 \} $
that
\begin{equation}
  \mathcal{X}^{
    m, j 
  }_n
  =
  \mathcal{X}^{
    m, j 
  }_n
  +
  \mu( t_n, 
    \mathcal{X}^{
      m, j 
    }_n
  )
  \, ( t_{ n + 1 } - t_n )
  +
  \sigma( t_n, 
    \mathcal{X}^{
      m, j 
    }_n
  )
  \, ( W_{ t_{ n + 1 } } - W_{ t_n } )
  .
\end{equation}
\end{example}

In the setting of Example~\ref{ex:EM} 
we consider under suitable further hypotheses 
for every sufficiently large 
$ m \in \N_0 $
the random variable
$ \mathcal{U}^{ \Theta_m } $
as an approximation of $ u(0,\xi) $
where
$ 
  u \colon [0,T] \times \R^d \to \R^k 
$
is a suitable solution of the PDE
\begin{multline}
  \tfrac{ \partial u}{ \partial t } ( t, x )
  +
  \tfrac{ 1 }{ 2 }
  \smallsum\limits_{ j = 1 }^d
  ( 
    \frac{ \partial^2 u}{ \partial x^2 } 
  )( t, x )\big(
    \sigma( 
      t, x
    )
    \,
    e^{ (d) }_j
    ,
    \sigma( 
      t, x
    )
    \,
    e^{ (d) }_j
  \big)
  +
    ( 
      \tfrac{ \partial u}{ \partial x } 
    )( t, x )
    \,
    \mu( t, x )
\\ 
  +
  f\big( 
    t, x, u(t,x), 
    ( \tfrac{ \partial u}{ \partial x } )( t, x ) 
    \,
    \sigma( t, x ) 
  \big)
  = 
  0 
\end{multline}
with 
$ u(T,x) = g(x) $,
$ e^{ (d) }_1 = (1,0,\dots,0) $,
$ \dots $,
$ e^{ (d) }_d = (0,\dots,0,1) \in \R^d $
for $ t \in [0,T] $, $ x = ( x_1, \dots, x_d ) \in \R^d $
(cf.\ \eqref{eq:PDE} in Section~\ref{sec:derivation} above).

\section{Appendix B: {\sc Python} and {\sc Matlab} source codes}

\subsection{{\sc Python} source code for an implementation of the deep BSDE solver 
in the case of the Allen-Cahn PDE~\eqref{eq:PDE_allencahn} in Subsection~\ref{subsec:allencahn}}
\label{sec:Python_code}

\lstinputlisting[caption={A {\sc Python} code for the 
deep BSDE solver in 
Subsection~\ref{sec:general_case} in the case 
of the PDE~\eqref{eq:PDE_allencahn}.},
label=code:deepPDEmethod]{DeepPDE_Python_Allen_Cahn_short.py}

\subsection{{\sc Matlab} source code for the Branching diffusion method used in Subsection~\ref{subsec:allencahn}}

\lstinputlisting[caption={A {\sc Matlab} code for the 
Branching diffusion method in the case of 
the PDE~\eqref{eq:PDE_allencahn} based on 
$ M = 10^7 $ independent realizations.}, 
label=code:Branching]{Branching_Matlab.m}

\subsection{{\sc Matlab} source code for the classical Monte Carlo method used in Subsection~\ref{subsec:HJB}}

\lstinputlisting[caption={A {\sc Matlab} code for a Monte Carlo method related 
to the PDE~\eqref{eq:PDE_HJB} based on 
$ M = 10^7 $ independent realizations.}, 
label=code:linearMC]{MonteCarlo_Matlab.m}

\subsubsection*{Acknowledgements}

Christian Beck and Sebastian Becker are gratefully acknowledged 
for useful suggestions regarding the implementation of the deep BSDE solver.
This project has been partially supported through 
the Major Program of NNSFC under grant 91130005,
the research grant ONR N00014-13-1-0338, 
and the research grant DOE DE-SC0009248.

\bibliographystyle{acm}
\bibliography{bibfile}

\end{document}